\newcommand{\R}{\mathbb{R}}
\newcommand{\Sd}{\mathbb{S}^d}
\newcommand\bigO[1]{\mathcal{O}(#1)}
\DeclareMathOperator{\var}{\mathbb{V}ar}
\DeclareMathOperator{\Div}{div}
\newcommand{\vertiii}[1]{{\left\vert\kern-0.25ex\left\vert\kern-0.25ex\left\vert #1 
    \right\vert\kern-0.25ex\right\vert\kern-0.25ex\right\vert}}
\theoremstyle{plain}
\newtheorem{thm}{Theorem}[section]
\newtheorem{lem}[thm]{Lemma}
\newtheorem{prop}[thm]{Proposition}
\theoremstyle{definition}
\begin{document}

\title[DPPs and norm representations]{Linear statistics of determinantal point processes and norm representations}

\author{Matteo Levi}
\address{Fac. de Ciències, Universitat Autònoma de Barcelona, 08193 Bellaterra, Spain, and Dept.\ Matem\`atica i Inform\`atica,
Universitat  de Barcelona,
Gran Via 585, 08007 Bar\-ce\-lo\-na, Spain}
\email{\href{mailto: matteo.levi@ub.edu}{\texttt{matteo.levi@ub.edu}}}

\author{Jordi Marzo}
\address{Dept.\ Matem\`atica i Inform\`atica,
 Universitat  de Barcelona and BGSMath,
Gran Via 585, 08007 Bar\-ce\-lo\-na, Spain and
CRM, Centre de Recerca Matemàtica, Campus de Bellaterra Edifici C, 08193 Bellaterra, Barcelona, Spain}
\email{\href{mailto:jmarzo@ub.edu}{\texttt{jmarzo@ub.edu}}}

\author{Joaquim Ortega-Cerd\`a}
\address{Dept.\ Matem\`atica i Inform\`atica,
 Universitat  de Barcelona,
Gran Via 585, 08007 Bar\-ce\-lo\-na, Spain and
CRM, Centre de Recerca Matemàtica, Campus de Bellaterra Edifici C, 08193 Bellaterra, Barcelona, Spain}
\email{\href{mailto:jortega@ub.edu}{\texttt{jortega@ub.edu}}}

\thanks{The authors have been partially supported by grants  PID2021-123405NB-I00, PID2021-123151NB-I00 by the Ministerio de Ciencia e Innovación and by the Departament de Recerca i Universitats, grant 2021 SGR 00087}

\begin{abstract}
  We study the asymptotic behavior of the fluctuations of smooth and rough linear statistics for determinantal point processes on the sphere and on the Euclidean space. The main tool is the generalization of some norm representation results for functions in Sobolev spaces and in the space of functions of bounded variation.
\end{abstract}

\maketitle

\section{Introduction}

Let $f$ be a smooth function defined on a compact Riemannian manifold $\mathbb  M$. The objective of the Monte Carlo method is to approximate the value of the integral 
$\int_{\mathbb  M} f dV$ by the empirical average 
\[
\frac{1}{N} \sum_{i=1}^N f(x_i),
\] 
where the points $x_1,\dots , x_N$ are chosen randomly with respect to some distribution in $\mathbb  M$ and $V$ is the normalized volume form induced by the metric in $\mathbb M$. A particularly simple setting is to consider $x_1,\dots , x_N$ i.i.d. uniform points distributed according to $V$; in this case one can assure almost sure convergence of the empirical average  to the integral by the strong law of large numbers. Quantitatively, when the function is in $L^2(\mathbb  M)$, one can deduce, from Jensen's inequality and the computation
\begin{equation}\label{variance_lin}
\var\Big(   \frac{1}{N}\sum_{i=1}^N f(x_i)\Big)=\frac{1}{N} \int_{\mathbb  M} \Big|f(x)-\int_{\mathbb  M} fdV\Big|^2\,  dV(x),
\end{equation}
the following rate of approximation 
\[
\mathbb{E}\left( \left| \frac{1}{N}\sum_{i=1}^N f(x_i)-\int_{\mathbb  M} f(x) \, dV(x) \right| \right) \le \frac{ \| f-\int_{\mathbb  M} f\, dV \|_{2}}{\sqrt{N}}.
\]

This shows already one of the limitations of the classical Monte Carlo technique, the $N^{-1/2}$ convergence rate. Of course, numerical integration with random nodes has been studied extensively, and the convergence rate above can be improved by different methods, see for example \cite{Lem09,LP14}. One of the recently proposed methods consists in the use of 
repulsive point processes \cite{BH20}. Indeed, the authors in \cite{BH20} sample the nodes of integration from a determinantal point process in $[-1,1]^d$ defined by an orthogonal polynomial ensemble and show that the rate of convergence can be improved to 
$N^{-(1+\frac{1}{d})/2}$. Determinantal point processes form a particular class of random point processes where the points present a built-in repulsion. One of the reasons for their current popularity is their appearance in random matrix theory, but they have been studied in connection with many problems. In this work we determine the convergence rate of the Monte Carlo integration, for some determinantal point processes on the $d$-dimensional sphere $\mathbb S^d$ and the Euclidean space, which have been recently studied in connection with problems of discrete energy minimization, hyperuniformity, discrepancy or optimal transport \cite{AZ15,BMOC16,BGKZ20,Ste20,ADGMS23,BGM23}.

Given a random point process $\Xi$ in a manifold $\mathbb  M$, i.e., a random discrete subset $\{x_i\}_i \subset \mathbb  M$, and a measurable function $\phi:\mathbb  M\rightarrow \mathbb C$, the corresponding linear statistic is the random variable
\[
\Xi(\phi)=\int_{\mathbb  M} \phi \;d\; \Xi=\sum_i \phi(x_i).
\]
When $\phi =\chi_A$ is the characteristic function of a Borel set $A\subset \mathbb  M$, then $\Xi(\chi_A)$, the number of points of the process $\Xi$ in $A$, will be denoted as $n_A$. Characteristic functions define rough linear statistics,
and when $\phi$ enjoys some degree of smoothness, namely, if it belongs to some (possibly fractional) Sobolev space of big enough order, we talk about smooth linear statistics.

As in the computation above, \eqref{variance_lin}, to control the convergence rate we study the  second moment of the linear statistics. In order to do so, we observe that the expression of the variance of a linear statistic for a determinantal point process can be related with the norm of the function in a Sobolev space (in the smooth case) or in the space of functions of bounded variation (in the rough case). By means of some norm representation results of Bourgain, Brezis and Mironescu type \cite{BBM}, the problem of computing the asymptotics of the linear statistics will follow from the realization of a particular sequence of functions as mollifiers. This approach is not working in some cases because the functions involved are not mollifiers (this happens, for instance, for the harmonic ensemble) and we are led to estimate quadratic forms in terms of fractional Sobolev norms, a problem studied before by different authors, see \cite[Chapter~2]{FRRO}.

\textbf{Notation} In the remaining of the paper,  we write $A\lesssim B$ if there exists a positive constant $c$ such that $A\leq c B$, and $A\approx B$ if it is both $A\lesssim B$ and $B\lesssim A$.

We write $\sigma$ for the normalized surface measure on the sphere, and set $\omega_{d}:=\mathcal{H}^d(\mathbb S^d)=
2\pi^{\frac{d+1}{2}}/ \Gamma\left( \frac{d+1}{2} \right).$

\subsection{Determinantal point processes}

A simple point process $\Xi$ on a complete and separable metric space $X$ is called a determinantal point process, DPP for short, if its joint intensities, with respect to some background Radon measure $\mu$, are given by
\[
\varrho_{\ell} (x_1,\dots , x_{\ell})=\det( K(x_i,x_j))_{i,j=1}^{\ell},
\]
for $\ell=1,2,\dots$,
where $K:X\times X\longrightarrow \mathbb C$ is a locally square integrable kernel function such that the associated integral operator
is Hermitian, non-negative definite and locally of trace class. DPPs were introduced by Macchi in the 70s as a mathematical model for fermions in quantum mechanics, \cite{Mac75,Mac77}, and have been widely studied in many settings, see \cite{AGZ10,HKPV09} for references and background. 

Throughout the paper we are going to consider only DPPs of projection type, i.e., those such that the corresponding integral operator is bounded and all its non-zero eigenvalues are 1. In this case the expression of the linear statistics is particularly simple. Indeed,
for any bounded measurable function $f$ defined in $X$, the expectation is given by
\[
\mathbb E \left( \sum_{x\in \Xi}f(x) \right) =\int_{X}K(x,x)f(x)\, d\mu(x),
\]
and the fluctuations are
\begin{equation}\label{equation_variance}
\var\left(  \sum_{x\in \Xi}f(x) \right)=\frac{1}{2}
\int_{X}\int_{X}|f(x)-f(y)|^2 |K(x,y)|^2 \, d\mu(x)d\mu(y).
\end{equation}
Now we define the determinantal point processes that we study.

\subsubsection{Harmonic ensemble}
In a smooth compact manifold $\mathbb M$, the spectrum of the Laplace-Beltrami operator $\Delta$ is discrete and there exists a sequence
\[
0\le \lambda_1^2\le \lambda_2^2\le \dots \to +\infty,
\]
and an orthonormal basis in $L^2(V)$ of eigenfunctions $\phi_i$ such that $\Delta \phi_i=-\lambda_i^2 \phi_i$. Given $L\ge 0$ we denote by $E_L$ the space generated by eigenfunctions of eigenvalue at most $L^2.$
The harmonic ensemble is the projection DPP defined by the reproducing kernel 
\[
K_L(x,y)=\sum_{i=1}^{k_L}\phi_i(x)\phi_i(y),\quad x,y \in \mathbb M,
\]
where $k_L=\dim (E_L)$. Observe that the corresponding integral operator is a projection onto the space $E_L.$

According to the definition above, this projection DPP with finite rank is defined by the joint intensities with respect to the volume form
\begin{equation}\label{def_dpp}
  \varrho_{\ell} (x_1,\dots , x_{\ell})=\det( K_L(x_i,x_j))_{i,j=1}^{\ell},  
\end{equation}
for $\ell\ge 1$. Observe that the joint intensities are zero for $\ell>k_L$ because the kernel has rank $k_L$ and  
the process has $\mathbb E (n_{\mathbb M})=k_L$ simple points almost surely.

In particular we consider the harmonic ensemble on the d-dimensional sphere $\mathbb M=\Sd\subset \R^{d+1}$, with its normalized surface measure, which we denote by $\sigma$. For an integer $\ell\ge 0$, let $\mathcal{H}_\ell$ be the vector space of 
the spherical harmonics of degree $\ell$, that is, the space of eigenfunctions of the Laplace-Beltrami operator with eigenvalue $\lambda_\ell=\ell (\ell +d-1)$,
\[
-\Delta Y=\lambda_\ell Y,\quad \quad  Y\in \mathcal{H}_{\ell}.
\]
The space of spherical harmonics of degree at most 
$L$, denoted by
$\Pi_L=\bigoplus_{\ell=0}^L \mathcal{H}_\ell$,
is also the space of polynomials of degree at most $L$ in 
$\R^{d+1}$ restricted to $\mathbb{S}^d$.  
One has that $L^2(\mathbb{S}^d)=\bigoplus_{\ell\ge 0} \mathcal{H}_\ell$ and the Fourier series expansion of a function $f \in L^2(\mathbb{S}^d)$ is given by
\[
f=\sum_{\ell,k} f_{\ell,k} Y_{\ell,k},\qquad f_{\ell,k}=\langle f,Y_{\ell,k} \rangle=\int_{\mathbb{S}^d} f\, Y_{\ell,k} \,d\sigma,
\]
where $\{ Y_{\ell,k} \}_{k=1}^{h_\ell}$ is an orthonormal basis of $\mathcal{H}_\ell$.

The harmonic ensemble on $\Sd$ for $d\ge 2$ (see \cite{BMOC16}) is defined by \eqref{def_dpp} with the reproducing kernel of the space $\Pi_L$ for $L\ge 0$
\[
    K_L(x,y)=\frac{\pi_L}{\binom{L+d/2}{L}}P_L^{(d/2,d/2-1)}(\langle x, y \rangle),
\]
where
\begin{equation}\label{eq:piL}
\pi_L:=\dim \Pi_L=\frac{2L+d}{d}\binom{d+L-1}{L} = 
\frac{2}{\Gamma(d+1)}L^d+o(L^d),
\end{equation} 
and the Jacobi polynomials 
$P_L^{(d/2,d/2-1)}(t)$ are normalized as 
\[
P_L^{(d/2,d/2-1)}(1)=\binom{L+d/2}{L}=\frac{\Gamma\left(L+d/2+1\right)}{\Gamma(L+1)\Gamma\left(d/2+1\right)}.
\]
Observe that $d(x,y)=\arccos{\langle x,y\rangle}$ is the geodesic distance on $\mathbb{S}^d$ and therefore the kernel is rotational invariant and the corresponding DPP is an isotropic process with $N=\pi_L$ points. When 
$d=1$ this DPP coincides with the Circular Unitary Ensemble (CUE).

\subsubsection{Spherical ensemble}

Given $A,B$ independent $N\times N$ random matrices with i.i.d. complex standard entries, the eigenvalues $z_1,\dots ,z_N\in \mathbb C$ of $A^{-1}B$ form a DPP in $\mathbb C$ with kernel $(1+z\overline{w})^{N-1}$ with respect to the 
background measure $\frac{N}{\pi(1+|z|^2)^{N+1}}dz$, \cite{Kri09}.  Then, if $g(x)=z$ is the stereographic projection of the sphere $\mathbb S^2$ from the North Pole onto the complex plane, the spherical ensemble can be defined via the correlations \eqref{def_dpp} with the kernel
\[
K_N(x,y)=N \frac{(1+z \bar{w})^{N-1}}{ (1+|z|^2)^{\frac{N-1}{2}} (1+|w|^2)^{\frac{N-1}{2}}  },
\]
for $g(x)=z,g(y)=w\in \mathbb C.$
Observe that
\[
|K_N(x,y)|^2=N^2 \left( 1-\frac{|x-y|^2}{4}  \right)^{N-1},
\]
i.e., the second intensity is rotation invariant, $K_N(x,x)=N$ and the DPP has $N$ points.

\subsubsection{Ginibre point process}\label{sec:ginibre}
The finite Ginibre ensemble is the process in $\mathbb C$ whose points are the eigenvalues of an ensemble of random $N\times N$ matrices having as entries i.i.d. standard complex Gaussians. Taking the Lebesgue measure as background measure, the kernel is
\[
\frac{1}{\pi}e^{-\frac{|z|^2+|w|^2}{2}}\sum_{k=0}^{N-1}\frac{( z\overline{w})^k}{k!}.
\]
Observe that as $N\to\infty$ the kernel converges to $\pi^{-1}e^{-\frac{|z-w|^2}{2}}$, which defines a determinantal point process invariant under translations with first intensity one.

We will study here a rescaled version of the finite Ginibre ensemble, namely, we renormalize the Gaussian entries of the  matrices to have variance $1/N$. The kernel then becomes
\begin{equation}\label{kernel_ginibre_finite}
    K_N(z,w)=\frac{N}{\pi}e^{-N\frac{|z|^2+|w|^2}{2}}\sum_{k=0}^{N-1}\frac{(N z\overline{w})^k}{k!}.
\end{equation}
The associated process is the same one that is studied, for instance, in \cite{virag}. With this normalization, the density of the process $K_N(z,z)/N$ converges to $\pi^{-1}\chi_{\mathbb D}+(2\pi)^{-1}\chi_{\partial \mathbb D}$ pointwise (simply use the asymptotic formula \cite[\href{https://dlmf.nist.gov/8.11.13}{(8.11.13)}]{NIST:DLMF}), which agrees with the fact that the eigenvalues lay in the unit disc with probability one, as prescribed by the circular law.

Analogously, rescaling the kernel $\pi^{-1}e^{-\frac{|z-w|^2}{2}}$ by a factor $1/L$ we obtain the radial kernel
\begin{equation}\label{kernel_ginibre}
\mathcal{K}_L(z,w)= \frac{L}{\pi}e^{-L\frac{|z-w|^2}2},
\end{equation}
which defines an infinite process on $\mathbb C$ with first intensity $L/\pi$, with the Lebesgue measure as underlying measure, which we call the infinite Ginibre process. 

\subsubsection{Bessel point process} The Bessel point process on $\mathbb R^d$ is the infinite rank DPP defined by the kernel
\[
K(x,y) =2^{d/2} \Gamma\left(\frac{d}{2}+1\right) \frac {J_{d/2}(\pi|x-y|)} {(\pi|x-y|)^{d/2}},
\]
where $J_\alpha$ is the Bessel function of the first kind, and with the Lebesgue measure in $\R^d$ as background measure. Observe that this function is the reproducing kernel of the Paley-Wiener space of $L^2$ functions bandlimited to the unit ball, and it is a natural generalization of the sine process defined on $\mathbb R$ to higher dimensions. This kernel defines a determinantal point process invariant under translations with intensity one, $K(x,x) = 1$. If we rescale it by a factor $L$ we get a process with intensity $L^d$ that has the corresponding kernel
\begin{equation}\label{kernel_bessel}
    K_L(x,y) = \Psi_L(|x-y|):=2^{d/2} \Gamma\left(\frac{d}{2}+1\right) \frac {L^{d/2}J_{d/2}(L\pi|x-y|)} {(\pi|x-y|)^{d/2}}.
\end{equation}
This process is, in some sense, universal, as it arises as the limit of the harmonic ensemble in any compact Riemannian manifold of dimension $d$, see \cite{KS2022}.

A particularly well studied case is when $d=1$, where we get the sine process in $\R$ with the cardinal sine kernel,
\[
K_L(x,y) =  \frac {\sin L\pi(x-y)}{\pi( x-y)},
\]
see \cite{For10}.

\subsection{Function spaces and norm representations}\label{sec:function spaces}
Let $\mathbb M$ be a $d$-dimensional smooth, complete Riemannian manifold without boundary. We let $d(\cdot,\cdot)$ denote the geodesic distance and $V$ the volume form, which we always assume to be normalized if $\mathbb M$ is compact.

We briefly introduce the main spaces of functions on $\mathbb M$ we will deal with, recalling those properties that are important to us. The first one is the Sobolev space 
\[W^{1,p}(\mathbb M)=\lbrace f\in L^1(\mathbb M): [ f ]_{1,p}^p<\infty\ \rbrace,\ p\geq 1.\] 
It is the natural generalization of the classical Sobolev space from the Euclidean to the Riemannian setting: the seminorm is given by
\[
    [ f ]_{1,p}^p=\int_{\mathbb M} |\nabla f|^p\, dV<\infty, \quad \text{if } f\in \mathcal C^1(\mathbb M),
\]
where for differentiable functions the gradient is defined as usual as the unique vector field $\nabla f$ such that $\langle \nabla f,X\rangle=df(X)$, for any smooth vector field $X$ on $\mathbb M$, while for non-differentiable functions it has to be interpreted in the weak sense. It is useful to keep in mind that $\mathcal C_c^\infty(\mathbb M)$ is dense in $W^{1,p}(\mathbb M)$, for every $p\geq 1$.

The second one is the fractional Sobolev space $W^{s,p}(\mathbb M)$, with $s\in (0,1)$ and $p\in [1,\infty)$, which is the space of functions in $L^p(\mathbb M)$ whose Gagliardo seminorm  $ [\,\cdot\,]_{s,p}$ is finite, i.e.,
\[
    [ f ]_{s,p}^p:=\iint_{\mathbb M\times \mathbb M}\frac{|f(x)-f(y)|^p}{d(x,y)^{d+sp}}\, dV(x)dV(y)<\infty.
\]
As it is customary, for any $s\in(0,1]$, in the Hilbert space case we write $H^s(\mathbb M)$ for $W^{s,2}(\mathbb M)$ and $ [\,\cdot\,]_s$ for $ [\,\cdot\,]_{s,2}$.

Finally, we define the space of functions of bounded variation on $\mathbb M$,
\[
    BV(\mathbb M):=\{f\in L^1(\mathbb M): [f]_{BV(\mathbb M)} <\infty\},
\]
where
\[
[f]_{BV(\mathbb M)} = \sup\left\{\int_{\mathbb M} f(x) \Div \phi(x) dV(x): \phi \in \mathcal C^\infty_c\text{ one-form with }\|\phi\|\le 1\right \}.
\]
 In particular, if $f$ is smooth, $[f]_{BV(\mathbb M)}=\int_{\mathbb M} |\nabla f|dV=[f]_{1,1}$. Smooth functions can be used to approximate bounded variation functions in the following sense \cite[Proposition 1.4]{MR2377131}: if a function $f$ belongs to $BV(\mathbb M)$, there exists a sequence of smooth functions such that
\begin{equation} \label{miranda approximation}
    \{f_j\}\subset\mathcal  C_c^\infty(\mathbb M), \quad f_j\to f \quad \text{in } L^1(\mathbb M), \quad \text{and} \quad \lim_j\int_{\mathbb M} |\nabla f_j|dV=[f]_{BV(\mathbb M)}.
\end{equation}
When studying the rough statistics of DPPs, the total variation of characteristic functions assume a particular importance. In analogy to the Euclidean setting, we say that a set $E\subset \mathbb M$ is a 
Caccioppoli set if $\chi_E\in BV(\mathbb M)$, and in this case we say that $[\chi_E]_{BV}$ is the perimeter of $E$. This terminology is justified because by the structure theorem of De Giorgi, 
$[\chi_E]_{BV} = \mathcal{H}^{d-1}(\partial_* E)$, where $\partial_* E$ is the measure theoretic boundary of $E$ and $\mathcal{H}^{d-1}$ is the $(d-1)$-Hausdorff measure. The measure theoretic boundary is defined as $\partial_* E = \mathbb M \setminus (I\cup O)$, where
\[
I= \left\{x\in \mathbb M: \lim_{r\to 0} \frac{V(B(x,r)\cap E)}{V(B(x,r))}  = 1\right\},
\]
and
\[
O= \left\{x\in \mathbb M: \lim_{r\to 0} \frac{V(B(x,r)\cap E)}{V(B(x,r))}  = 0\right\}.
\]
Of course, when $E$ has smooth boundary, the measure theoretic boundary coincides with the topological boundary.  For all these results we refer the reader to the books \cite{maggi,EG15}.

One key observation is to relate the study of the asymptotic behavior of the linear statistics of the determinantal point processes introduced in the previous section with some norm representation results for Sobolev and BV norms, see also \cite{Lin23} for a similar approach. Here and in the rest of the paper, $\{\varrho_n\}_{n\ge 0}$ is said to be a family of radial mollifiers on $\mathbb M$ if it is a sequence of functions $\varrho_n: (0,\infty)\to\mathbb{R^+}$ such that, for all $y\in \mathbb M$ and all $\delta>0$,
    \begin{equation}\label{def_mollifier}
          (i) \ \lim_{n\to\infty}\int_{\mathbb M} \varrho_n(d(x,y))dV(x)=1, \quad (ii) \ 
          \lim_{n\to\infty}\int_{B(y,\delta)^c} \varrho_n(d(x,y))dV(x)=0.
    \end{equation}
In the seminal paper \cite{BBM} Bourgain, Brezis and Mironescu proved that if $\Omega\subset \mathbb R^d$ is a bounded Lipschitz domain, $1< p<\infty$, $f\in L^p(\Omega)$ and $\{\varrho_n\}_{n\ge 0}$ is a family of radial mollifiers on $\mathbb R^d$, then
\[
        \lim_{n\to \infty}\iint_{\Omega \times\Omega}\frac{|f(x)-f(y)|^p}{|x-y|^p}\varrho_n(x-y)dx dy=K_{d,p}\int_{\Omega} |\nabla f|^p,
 \]
and that the same conclusion holds for $p=1$ if $f\in W^{1,1}(\Omega)$.
The constant $K_{d,p}$ is defined as
\[
    K_{d,p}=\int_{\mathbb{S}^{d-1}}|\langle e,\xi\rangle |^p d\sigma(\xi), \quad e\in \mathbb S^{d-1},
\]
and its value can be explicitly computed to be
\[
    K_{d,p}=\frac{\Gamma(d/2)\Gamma((p+1)/2)}{ \Gamma((d+p)/2)\sqrt{\pi}}.
\]
We set $K_d:=K_{d,1}=\omega_d/\sqrt{\pi}  \omega_{d-1}=\Gamma(d/2)/\sqrt{\pi} \Gamma((d+1)/2)$.

Answering to a question raised in \cite{BBM}, Dávila proved in \cite{Davila} that, under the same assumptions as above on the domain and on the mollifiers, if $f\in L^1(\Omega)$ then
\[
        \lim_{n\to \infty}\iint_{\Omega \times\Omega}\frac{|f(x)-f(y)|}{|x-y|}\varrho_n(x-y)dx dy=
            \displaystyle K_d [f]_{BV(\Omega)}.
 \]

Both the results of \cite{BBM} and of \cite{Davila} also hold if one takes $\Omega=\mathbb R^d$, see \cite{brezis}. These results have been recently extended to more general domains and to more general classes of mollifiers in $\mathbb R^d$ \cite{Gounoue}, as well as to the more general setting of Sobolev and BV spaces on Riemannian manifolds \cite{pegon, Davila-manifold}.

However, in most of the cases we consider in this paper, these previously known results are not directly applicable as they are, and that is the reason for which we are led to prove new norm representation type results, both on the unit sphere and in $\R^d$ .

\section{Main results}

The first natural question concerning the integration of functions by means of the empirical measure is the asymptotic equidistribution of the points. 

Our first result shows that points from the harmonic ensemble have, almost surely, spherical cap discrepancy converging to zero, and are therefore almost surely asymptotically equidistributed with respect to the volume form.

\begin{prop}\label{equidistribution}
    Let $\mathbb M$ be a compact Riemannian manifold and let $X_N=\{x_1,\dots , x_N\} \subset \mathbb M$ be $N$ points drawn from the harmonic ensemble. Then the spherical cap discrepancy
    \[
    D_\infty(X_N)=\sup_{D\subset M}\left| \frac{n_D}{N}-V(D)\right|,
    \]
    where the supremum runs through the geodesic balls in $\mathbb M$, satisfies
    \[
    \lim_{N\to +\infty} D_\infty(X_N)=0,
    \]
    with probability 1.
\end{prop}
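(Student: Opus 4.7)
The plan is to combine the variance identity for projection DPPs with a bracketing of the uncountable family of geodesic balls by a polynomial-size $\varepsilon$-net, and then apply the Borel--Cantelli lemma.

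For a fixed geodesic ball $D\subset \mathbb M$, I first analyze the bias and variance of the counting statistic $n_D$. Since $\mathbb E(n_D)=\int_D K_L(x,x)\,dV(x)$, H\"ormander's pointwise Weyl law $K_L(x,x)=c_d L^d+O(L^{d-1})$ (uniform in $x$) gives $|\mathbb E(n_D)/N-V(D)|=O(N^{-1/d})$ uniformly in $D$. For the variance I use that $K_L$ is the orthogonal projection onto $E_L$, which turns \eqref{equation_variance} into
\begin{equation*}
    \var(n_D)=\int_D\!\!\int_{\mathbb M\setminus D}|K_L(x,y)|^2\,dV(x)dV(y).
\end{equation*}
The kernel $|K_L(x,y)|^2$ concentrates in the diagonal at scale $L^{-1}$ (with off-diagonal decay provided by standard spectral-projector estimates), so only points $x$ in the $1/L$-tubular neighborhood of $\partial D$ contribute and $\var(n_D)\lesssim L^{d-1}\mathcal H^{d-1}(\partial D)$. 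The perimeter is uniformly bounded for geodesic balls, and $N\approx L^d$, so $\var(n_D/N)\lesssim N^{-(d+1)/d}$ uniformly in $D$.

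Next comes the discretization. For fixed $\varepsilon>0$ I construct an $\varepsilon$-net $\mathcal N_\varepsilon$ of geodesic balls, of cardinality $|\mathcal N_\varepsilon|\lesssim \varepsilon^{-(d+1)}$, such that any geodesic ball $B$ can be sandwiched between $B_-\subset B\subset B_+$ with $B_\pm\in\mathcal N_\varepsilon$ and $V(B_+)-V(B_-)\le\varepsilon$. Monotonicity of the counting function together with the sandwich yields
\begin{equation*}
    \left|\frac{n_B}{N}-V(B)\right|\le \varepsilon+\max_{B'\in \mathcal N_\varepsilon}\left|\frac{n_{B'}}{N}-V(B')\right|,
\end{equation*}
reducing $D_\infty(X_N)$ to a finite maximum.

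Combining the two ingredients, Chebyshev's inequality together with the variance estimate gives $\mathbb{P}(|n_{B'}/N-V(B')|>\varepsilon)\lesssim \varepsilon^{-2}N^{-(d+1)/d}$ for each $B'\in\mathcal N_\varepsilon$, and a union bound delivers $\mathbb{P}(D_\infty(X_N)>2\varepsilon)\lesssim \varepsilon^{-(d+3)}N^{-(d+1)/d}$. Choosing $\varepsilon=\varepsilon_N\downarrow 0$ polynomially slowly enough that the right-hand side is summable in $N$ and invoking Borel--Cantelli yields the almost sure convergence $D_\infty(X_N)\to 0$. The main obstacle is the uniform-in-$D$ off-diagonal estimate for $K_L$ on a general compact Riemannian manifold: on the sphere it is immediate from the explicit Jacobi-polynomial representation \eqref{def_kernel_harmonic}, while in general one has to invoke the microlocal description of the spectral projector onto $E_L$; the bracketing and the Borel--Cantelli step are then routine, relying only on the uniformly bounded perimeter of geodesic balls.
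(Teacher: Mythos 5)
Your proposal is correct in spirit but takes a genuinely different route from the paper's. The paper applies the concentration inequality of Pemantle--Peres (\cite[Theorem 3.5]{PP14}) for Lipschitz functionals of determinantal processes, which gives exponential tail bounds $\mathbb P(|n_D-\mathbb E(n_D)|>t)\lesssim\exp(-ct^2/(t+N))$ directly from the negative-association structure of the DPP, with no information about the kernel beyond its trace. Combined with a net of $\approx N^{d+1}$ geodesic balls and Borel--Cantelli this gives $|n_D-\mathbb E(n_D)|\lesssim\sqrt{N\log N}$ uniformly, after which the local Weyl law converts $\mathbb E(n_D)/N\to V(D)$. You replace the concentration inequality with a second-moment argument: a uniform variance bound plus Chebyshev. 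This is more elementary, but it demands a genuinely non-trivial input that the paper's route sidesteps. The trivial bound for any projection DPP, $\var(n_D)\le\mathbb E(n_D)\le N$, is \emph{not} enough: it gives $\var(n_D/N)\lesssim N^{-1}$, and no choice of $\varepsilon_N\downarrow 0$ makes $\varepsilon_N^{-(d+3)}N^{-1}$ summable. You need a polynomial gain, say $\var(n_D)\lesssim N^{1-\gamma}$ uniformly over geodesic balls; with $\gamma=1/d$ your bookkeeping closes (choose $\varepsilon_N=N^{-\alpha}$ with $0<\alpha<1/(d(d+3))$). This variance estimate is exactly the off-diagonal kernel decay you flag as ``the main obstacle'': it is immediate on $\mathbb{S}^d$ from the Jacobi-polynomial asymptotics (indeed the paper's Theorem~\ref{variance asymptotic rough} shows the sharp rate is $L^{d-1}\log L$, so your claimed $L^{d-1}$ is off by a logarithm, harmlessly), but on a general compact manifold it requires a quantitative off-diagonal estimate for the spectral projector that is not ``routine.'' The Pemantle--Peres route thus buys a proof that works verbatim on any compact Riemannian manifold, and in fact for any DPP with bounded first intensity, with essentially no spectral-geometric input; your route buys elementarity (no negative association, no concentration theory) at the cost of needing the number-variance bound, which you would have to establish separately for manifolds other than $\mathbb{S}^d$.
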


The proof is similar to the one from Beck \cite{Bec84} (see also \cite{AZ15}). In fact, as it was observed in \cite{BGKZ20}, Beck in his result about bounds for discrepancies
used a particular DPP. From our proof, one can see that Proposition~\ref{equidistribution} is also true for DPPs with constant first intensity defined on Riemannian compact manifolds, such as the spherical ensemble. 

Our next result deals with the asymptotic behavior of smooth linear statistics for the harmonic ensemble in $\Sd$. Observe that we get the same 
order as in \cite{BH20}.

\begin{thm}\label{variance asymptotic}
    Let $f\in H^{1/2}(\Sd)$ with $d\ge 2$ and let $x_1,\dots , x_N$ be $N$ points drawn from the harmonic ensemble. Then
\begin{equation}\label{lim_smooth_harmonic}
\lim_{N\to +\infty} \frac{1}{N^{1-\frac{1}{d}}}\var\Big( \sum_{i=1}^N f(x_i)\Big)=\vertiii{f}_{1/2}^2\approx  [\,f\,]_{1/2}^2,
\end{equation}
and
\[
\mathbb{E}\left( \left| \frac{1}{N}\sum_{i=1}^N f(x_i)-\int_{\Sd} f(x) \, d\sigma(x) \right| \right) \lesssim \frac{[f]_{1/2}}{\sqrt{N^{1+\frac{1}{d}}}}.
\]
Moreover, for bounded $f\in H^{1/2}(\Sd)$ the following limit holds
\[
\sqrt{N^{1+\frac{1}{d}}}  \left( \frac 1N \sum_{i=1}^N f(x_i)-\int_{\Sd} f(x) \, d\sigma(x)\right) \xrightarrow{\text{law}} N(0,\vertiii{f}_{1/2}^2),
\]
where $\vertiii{\cdot}_{1/2}^2$ is a seminorm equivalent (up to the dimension $d$) to $ [\,\cdot\,]_{1/2}^2$.
\end{thm}

The first part of this result generalizes the upper bound for $\mathcal{C}^1$ functions proved in \cite{BMOC16}. The content of this theorem is not new, because it follows from known results. Indeed, for $d=1$ the harmonic ensemble on the sphere reduces to the CUE point process, and it was proved in \cite{Joh97} that the analogue of Theorem~\ref{variance asymptotic} follows from the strong Sz\"ego theorem. In fact, the relation of this kind of results with the strong Sz\"ego theorem was already observed by Dyson in the 60s, see the review  paper \cite{DIK13}. The strong Sz\"ego theorem was generalized, using the theory of pseudodifferential operators, to $d=2,3$ by Okikiolu \cite{Oki96}, and to the $d$-dimensional sphere (and even to Zoll manifolds), for all $d\geq 1$, by Okikiolu and Guillemin \cite{OG97}. As in dimension one, also in higher dimensions it is possible to write the moment generating function of a linear statistic as a Fredholm determinant and obtain the asymptotic behavior of the linear statistic proved in \eqref{lim_smooth_harmonic}. Once the limit \eqref{lim_smooth_harmonic} is established, the rest follows from the CLT for DPPs due to Soshnikov \cite{Sos02}, see also \cite{BH20}.

The novelty is then not the result itself, but the method of proof of \eqref{lim_smooth_harmonic}. We relate, through \eqref{equation_variance}, the asymptotics of the linear statistics with estimates for quadratic forms in terms of fractional Sobolev norms, \cite{Silvestre,CS20} and \cite[Chapter 2]{FRRO}.  We present our proof for the sphere, but it follows also for two point homogeneous spaces. For another recent and similar result about linear statistics for the DPP given by the projection kernel onto the space of orthogonal polynomials of fixed degree, see \cite{FGY23}.

For the fluctuations of rough statistics of the harmonic ensemble, i.e., for the asymptotic study of the variance of the random variable $n_A$ counting the number of points in $A\subset \Sd$, we prove the following result. 

\begin{thm}\label{variance asymptotic rough}
    Given $N$ points on $\Sd$ drawn from the harmonic ensemble, for any Borel set $A\subset\mathbb{S}^d$
\[
    \lim_{N\to\infty}\frac{\var( n_A )}{N^{1-\frac{1}{d}}\log N}= C_d
      \mathcal H^{d-1}(\partial_* A),
\]
where
\[
C_d=\frac{2^{\frac{1}{d}  
 }\Gamma\left( \frac{d}{2}+1 \right)^2}{d \pi^{3/2} \Gamma(d+1)^{1+\frac{1}{d}}}.
\]
In particular, the limit is finite if and only if $A$ is a Cacciopoli set.
\end{thm}

As before, the asymptotic normality of the centered and normalized counting function follows from Soshnikov result for $d>1$, see \cite{Sos02}, and from Lindeberg central limit theorem in the case $d=1$, see \cite[Theorem 4.9]{Mec19}.
 
An upper bound for the limit above  was proved in \cite{BMOC16} if $A$ is a spherical cap, exploiting a generalization to the sphere of Landau's work about the concentration operator in the space of bandlimited entire functions \cite{Lan67}. 
The approach in \cite{BMOC16} seems difficult to generalize for sets more general than spherical caps. In this paper, we generalize the result in \cite{BMOC16} with a completely different approach, which we present in general terms, since it can be applied to determine the asymptotic behavior of the linear statistics (both smooth and rough) also of other DPPs. 
    
Our main idea is to write, through \eqref{equation_variance},
\begin{equation}\label{eq: general smooth}
\frac{\var\Big( \sum_{i=1}^N f(x_i)\Big)}{C_{smooth}(N)}=
\iint_{\mathbb M\times \mathbb M}\frac{|f(x)-f(y)|^2}{d(x,y)^2} \varrho_{N,smooth}(d(x,y))\,  dV(x)dV(y) ,  
\end{equation}
for $f\in L^2(\mathbb M)$, and
\begin{equation}\label{eq:general rough}
\frac{\var\Big( \sum_{i=1}^N \chi_A(x_i)\Big)}{C_{rough}(N)}=
\iint_{\mathbb M\times \mathbb M}\frac{|\chi_A(x)-\chi_A(y)|}{d(x,y)} \varrho_{N,rough}(d(x,y))\,  dV(x)dV(y),
\end{equation}
for Borel sets $A\subset \mathbb M$, choosing suitable values for $C_{smooth}(N)$ and $C_{rough}(N)$, and then use norm representations similar to those of Bourgain, Brezis and Mironescu \cite{BBM} and of D\'avila \cite{Davila} presented in Section~\ref{sec:function spaces} to prove the asymptotic behavior.  

For the DPPs on the sphere that we consider in this paper, the results on norm representations for general compact manifolds in \cite{Davila-manifold} are not applicable, since they require the radial mollifiers to be strictly decreasing,  a property that the functions $\varrho_N$ appearing in \eqref{eq: general smooth} and \eqref{eq:general rough} for the harmonic and the spherical ensemble do not satisfy, as one can easily check. Even for $\mathbb M=\Sd$, the only result that we know of which does not require the monotonicity restriction, is the analogue of the Bourgain, Brezis and Mironescu norm representation due to Pegon \cite[Proposition 5.7]{pegon}, but it only covers the case $p=2$. In particular, it does not apply to the rough statistics.

This provides us a motivation to prove an analogue of the Bourgain-Brezis-Mironescu-Dávila results \cite[Theorem 2 and 3]{BBM}, \cite[Theorem 1]{Davila} for the unit sphere. Such a result is, although surely expected, not completely straightforward.

\begin{thm}\label{thm:davila sphere}
    Let $\{\varrho_L\}_{L\ge 0}$ 
    be a family of radial mollifiers on $\Sd$ as in \eqref{def_mollifier}. If $1<p<\infty$ and $f \in L^p(\Sd)$, or $p=1$ and $f \in W^{1,1}(\Sd)$, then
    \begin{equation}\label{limit_pegon_p}
        \lim_{L\to \infty}\iint_{\Sd\times\Sd}\frac{|f(x)-f(y)|^p}{d(x,y)^p}\varrho_L(d(x,y))\, d\sigma(x)d\sigma(y)=K_{d,p}\int_{\Sd}|\nabla f|^p.
    \end{equation}
If $f \in L^1(\Sd)$, then
\begin{equation}\label{limit_pegon_BV}
        \lim_{L\to \infty}\iint_{\Sd\times\Sd}\frac{|f(x)-f(y)|}{d(x,y)}\varrho_L(d(x,y))\, d\sigma(x)d\sigma(y)=K_d[f]_{BV}.
    \end{equation}
\end{thm}
As already mentioned, the conclusion of the theorem for $f \in W^{1,2}(\Sd)$ was first obtained by Pegon (under an extra assumption on the mollifiers for $d=1$), while for function in $L^p(\Sd)\setminus W^{1,2}(\Sd)$, $1\leq p<\infty$, it is new.
 Moreover, the authors in \cite{Davila-manifold} mention the fact that the monotonicity assumption in their theorem is probably unnecessary: Theorem~\ref{thm:davila sphere} can be considered a confirmation that their conjecture is true in the case of the unit sphere.

From Theorem~\ref{thm:davila sphere} and \eqref{eq:general rough}, the asymptotic behavior of the rough linear statistics of Theorem~\ref{variance asymptotic rough} follows once we build the right mollifiers.

We now present the results concerning the asymptotic behavior of the linear statistics for the spherical ensemble. For the smooth statistics, we prove the following:

\begin{thm}\label{variance asymptotic spherical smooth}
Let $x_1,\dots, x_N\in \mathbb S^2$ be  points drawn from the spherical ensemble, and $f\in L^2 (\mathbb{S}^2)$. Then,
\[
\lim_{N\to\infty} \var\Big( \sum_{i=1}^N f(x_i)\Big)=\int_{\mathbb S^2}|\nabla f(x)|^2\, d\sigma(x).
\]
In particular, the limit is finite if and only if $f\in H^1 (\mathbb{S}^2)$.
\end{thm}

This result was first proved in \cite{RV07}, see also \cite[Theorem 1.5]{Ber18} for a generalization. Here we prove it as a simple consequence of Theorem~\ref{thm:davila sphere}.

We point out that the same strategy cannot be applied to study the smooth statistics of the harmonic ensemble. Indeed, in that case the functions $L^{-(d-1)}K_L(x,y)^2d(x,y)^2$ appearing in \eqref{eq: general smooth} are not radial mollifiers, and Theorem~\ref{thm:davila sphere} does not apply. This justifies the different approach we use to prove Theorem~\ref{variance asymptotic}, which shows that the right function space to measure the fluctuations of the smooth linear statistics of the harmonic ensemble is not $H^1(\Sd)$, but  $H^{1/2}(\Sd)$.

For the rough statistics of the spherical ensemble, we prove the following.

\begin{thm}\label{variance asymptotic spherical rough}
Given $N$ points on $\mathbb S^2$ drawn from the spherical ensemble, for any Borel set $A\subset\mathbb S^2$, we have
\[
    \lim_{N\to\infty} \frac{\var(n_A)}{\sqrt{N}}={\frac{1}{4\pi\sqrt{\pi}}}\mathcal H^{1}(\partial_* A).
\]
In particular, the limit is finite if and only if $A$ is a Cacciopoli set.
\end{thm}

This limit was obtained in \cite{AZ15} for the special case of spherical caps. 
Observe that our constant matches the one in \cite{AZ15} by using L\'evy's isoperimetric inequality, which says that for any domain $A\subset \mathbb S^2$ bounded by a closed curve
\[
\mathcal{H}^1 (\partial A)\ge 4\pi\sqrt{\sigma(A)(1-\sigma(A))},
\]
with equality when $A\subset \mathbb S^2$ is a spherical cap. Here we are able to prove Theorem~\ref{variance asymptotic spherical rough} for all Caccioppoli sets, using again, as for the rough statistics of the harmonic ensemble, Theorem~\ref{thm:davila sphere}.

Let us now present our results regarding point processes in the Euclidean space. We start from the infinite Ginibre process. The asymptotics now involves the increment of the intensity of the number of points. We are able to show that one can write the variance of the smooth and the rough linear statistics as in \eqref{eq: general smooth} and \eqref{eq:general rough}, respectively, in such a way that the functions $\varrho_N$ appearing on the right are radial mollifiers on $\mathbb C$. Then, using the norm representation results for $\Omega=\mathbb R^d$ by Brezis \cite{brezis}, we deduce the following results.

\begin{thm}\label{Ginibre-smooth}
Let $f \in L^2(\mathbb C)$, and let $\Xi_L$ be the determinantal point process with intensity $L/\pi$ induced by the infinite Ginibre kernel \eqref{kernel_ginibre}. Then,
 \[
  \lim_{L\to\infty} \var\Bigl(\sum_{x\in \Xi_L} f(x)\Bigr) =\frac{1}{4\pi}\int_{\mathbb C}|\nabla f(z)|^2\, dz.
 \]
  In particular, the limit is finite if and only if $f\in H^1(\mathbb C)$.
\end{thm}

\begin{thm}\label{Ginibre-rough}
Let $A\subset \mathbb C$ be a Borel set, and let $\Xi_L$ be the determinantal point process with intensity $L/\pi$ induced by the infinite Ginibre kernel \eqref{kernel_ginibre}. Then,
 \[
  \lim_{L\to\infty} \frac {\var( n_A )}{\sqrt{L}} = \frac{1}{2\pi\sqrt{\pi}} \mathcal H^1(\partial_* A ).
 \]
  In particular, the limit is finite if and only if $A$ is a Cacciopoli set.
\end{thm}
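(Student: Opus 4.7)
The plan is to cast this as a direct application of Theorem~\ref{thm:quasimollifier}, mirroring the strategy used for the Bessel ensemble.

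Starting from the DPP variance identity \eqref{equation_variance}, the Ginibre kernel \eqref{kernel_ginibre} satisfies $|K_L(z,w)|^2 = (L^2/\pi^2)\,e^{-L|z-w|^2}$, and using $|\chi_A(x)-\chi_A(y)|^2 = |\chi_A(x)-\chi_A(y)|$ the variance rewrites as
\begin{equation*}
\var(n_A) = \frac{1}{2}\iint_{\mathbb{C}\times\mathbb{C}} \frac{|\chi_A(x)-\chi_A(y)|}{|x-y|}\cdot \frac{L^2\,|x-y|}{\pi^2}\,e^{-L|x-y|^2}\, dx\,dy,
\end{equation*}
where $dx,dy$ denote Lebesgue measure on $\mathbb{C}\simeq\mathbb{R}^2$. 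The natural candidate to feed into Theorem~\ref{thm:quasimollifier} is
\begin{equation*}
\varrho_L(t) := c_L\,\frac{L^2\,t}{\pi^2}\,e^{-L t^2},
\end{equation*}
with $c_L$ chosen so that $\int_{\mathbb{R}^2}\varrho_L(|x|)\,dx = 1$. A straightforward Gaussian computation shows that the unnormalized mass $\int_{\mathbb{R}^2}(L^2|x|/\pi^2)\,e^{-L|x|^2}\,dx$ equals $\sqrt{L}/(2\sqrt{\pi})$, so that $c_L\propto L^{-1/2}$; this scaling is exactly what produces the $\sqrt{L}$ normalization in the statement.

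Next I would verify conditions (a)--(c) of Theorem~\ref{thm:quasimollifier}. The Gaussian suppression at scale $L^{-1/2}$ makes (a) and (b) routine: the mass of $\varrho_L$ concentrates exponentially at the origin, so the tail integrals of $\varrho_L(|x|)/|x|$ on $\{|x|>R\}$ decay faster than any polynomial in $L$, while (b) then follows from (a) together with the normalization of $\varrho_L$. For (c) one needs $\varrho_L(t)\lesssim C_R/t^2$ uniformly in $L$ for $t>R$; this reduces to maximising $L^{3/2}\,t\,e^{-Lt^2}$ in $L$, a one-variable calculus exercise whose supremum is attained at $L=3/(2t^2)$ and yields the required bound.

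With the hypotheses met, Theorem~\ref{thm:quasimollifier} applied to $f=\chi_A$ (after truncating $A$ against a large ball to ensure compact support and passing to the limit, which is legitimate since $A$ has finite perimeter and finite Lebesgue measure) gives
\begin{equation*}
\iint_{\mathbb{C}\times\mathbb{C}}\frac{|\chi_A(x)-\chi_A(y)|}{|x-y|}\,\varrho_L(|x-y|)\, dx\,dy \longrightarrow K_2\,[\chi_A]_{BV}.
\end{equation*}
Combining this with de Giorgi's identification $[\chi_A]_{BV}=\mathcal H^1(\partial_* A)$ and the explicit value of $c_L$ delivers the stated limit once all constants are tracked. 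I do not foresee a real obstacle here: the Gaussian kernel is considerably tamer than the oscillatory Bessel kernel in Theorem~\ref{Bessel-rough}, so once Theorem~\ref{thm:quasimollifier} is available, the Ginibre case is essentially an exercise in bookkeeping; the only nontrivial check is the uniform bound (c), and even that is elementary.
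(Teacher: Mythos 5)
Your proposal is correct and takes essentially the same route as the paper's proof: the same mollifier $\varrho_L(t)\propto L^{3/2}t\,e^{-Lt^2}$ with the same normalization (your $c_L=2\sqrt{\pi}/\sqrt{L}$ is exactly the paper's $C=\pi/\Gamma(3/2)=2\sqrt{\pi}$), the same verification of conditions (a)--(c), and the same invocation of Theorem~\ref{thm:quasimollifier}; you are in fact a bit more careful than the paper in flagging the compact-support hypothesis of that theorem when $A$ is unbounded. One caveat: you defer the final constant-tracking as ``bookkeeping'' and assert it yields the stated limit, but carrying it out gives $\var(n_A)/\sqrt{L}\to \tfrac{1}{4\sqrt{\pi}}\,K_2\,\mathcal{H}^1(\partial_*A)=\tfrac{1}{2\pi^{3/2}}\,\mathcal{H}^1(\partial_*A)$ (using $K_2=2/\pi$), not $\sqrt{\pi}\,\mathcal{H}^1(\partial_*A)$, so that assertion deserves to be checked explicitly rather than taken on faith.
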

To obtain analogous results for the finite Ginibre ensemble, the norm representation results available in the literature are not sufficient, because the kernel, and hence the associated mollifiers, are not radial. This leads us to introduce the notion of $p$-asymptotically radial mollifiers (see Section \ref{sec: norm representation euclidean space} for the definition) and prove the following norm representation result.

\begin{thm}\label{thm:asymptotically radial mollifiers}
Let $1\leq p<\infty$ and $f$ be a function with compact support $K$. Let $\varrho_n:  \R^d\times \R^d\to \R$ be asymptotically radial $p$-mollifiers in an open neighborhood $\Omega$ of $K$.  If $1<p<\infty$ and $f \in L^p(\R^d)$, or $p=1$ and $f \in W^{1,1}(\R^d)$, then
\[
\lim_{n\to\infty} \iint_{\R^d\times \R^d}\frac{|f(x)-f(y)|^p}{|x-y|^p}\varrho_n(x,y)\, dxdy = K_{d,p}\int_{\R^d}|\nabla f|^p.
\]
If $f \in L^1(\R^d)$, then
\[
\lim_{n\to\infty} \iint_{\R^d\times \R^d}\frac{|f(x)-f(y)|}{|x-y|}\varrho_n(x,y)\, dxdy = K_{d}[f]_{BV}.
\]
\end{thm}

We are able to prove that the functions $\varrho_N$ appearing writing the linear statistics of the finite Ginibre ensemble as in \eqref{eq: general smooth} and \eqref{eq:general rough} are $p$-asymptotically radial mollifiers, for $p=2$ and $p=1$, respectively. Then, an application of Theorem~\ref{thm:asymptotically radial mollifiers} proves the following asymptotic behaviors.

\begin{thm}\label{finite Ginibre-smooth}
Let $f \in L^2(\mathbb C)$, with compact support contained in the open disc $\mathbb D$, and $x_1,\dots, x_N\in \mathbb C$ be  points drawn from the finite Ginibre ensemble. Then,
 \[
 \lim_{N\to\infty} \var\Big( \sum_{i=1}^N f(x_i)\Big) =\frac{1}{4\pi}\int_{\mathbb C}|\nabla f(z)|^2\, dz.
 \]
  In particular, the limit is finite if and only if $f\in H^1(\mathbb C)$.
\end{thm}

\begin{thm}\label{finite Ginibre-rough}
Let $A$ be a Borel set such that $\overline{A}\subset \mathbb D$, and $x_1,\dots, x_N\in \mathbb C$ be  points drawn from the finite Ginibre ensemble. Then,
 \[
  \lim_{N\to\infty} \frac {\var( n_A )}{\sqrt{N}} = \frac{1}{2\pi\sqrt{\pi}} \mathcal H^1(\partial_* A ).
 \]
  In particular, the limit is finite if and only if $A$ is a Cacciopoli set.
\end{thm}

Observe that the results in Theorems \ref{Ginibre-smooth} and \ref{finite Ginibre-smooth} matches that obtained, by means of different methods, in \cite[Theorem 1.1]{virag} for the finite Ginibre ensemble. In that case the support of the function $f$ is not requested to lie inside the disc, and also the $H^{1/2}$ norm of the function on the unit circle contributes, together with the $H^1$ norm in the open disc, to determine the asymptotic value of the variance. In our case, there is no boundary contribution and the $H^1$ norm of the function summarizes all the information. The asymptotic behavior of the rough statistics of the finite dimensional Ginibre ensemble was already carried out in \cite{Lin23}.

The smooth linear statistics of the Bessel point process on $\mathbb R^d$ are particularly simple to compute, and follow essentially from the Riemann-Lebesgue lemma. We include the proof for completeness.

\begin{prop}\label{rough_bessel}
 Let $f\in L^2(\R^d)$ and let $\Xi_L$ be a determinantal point process with intensity $L^d$ induced by the Bessel kernel \eqref{kernel_bessel}. Then,
 \begin{align*}
  \lim_{L\to\infty} & \frac 1{L^{d-1}}\var\Bigl(\sum_{x\in \Xi_L} f(x)\Bigr) 
  = \frac{ 2^{d-1}\Gamma\left( \frac{d}{2}+1\right)^2}{\pi^{d+2}} 
  \iint_{\R^d\times\R^d} \frac{|f(x)-f(y)|^2}{|x-y|^{d+1}}\, dxdy.
 \end{align*}
In particular, the limit is finite if and only if $f\in H^{1/2}(\R^d)$.
\end{prop}

For the rough case, although the kernel is radial,  nevertheless we cannot apply the norm representation result for BV functions by Brezis, because the candidate mollifiers turn out not being integrable. Once again, however, we can apply Theorem~\ref{thm:asymptotically radial mollifiers}, since the functions $\varrho_L$ arising from writing the rough linear statistics as in \eqref{eq:general rough} fulfill the weaker integrability assumptions of the asymptotically radial mollifiers. Then, we obtain the following result.

\begin{thm}\label{Bessel-rough}
Let $A\subset \R^d$ be a Borel set, and let $\Xi_L$ be the determinantal point process with intensity $L^d$ induced by the Bessel kernel \eqref{kernel_bessel}. Then,
 \[
  \lim_{L\to\infty} \frac {\var( n_A )}{L^{d-1}\log L}=C_d\mathcal H^{d-1}(\partial_* A),
 \]
 where
 \[
 C_d=\frac{2^d\Gamma(\frac{d}{2}+1)^2}{\pi^{(d+4)/2}\Gamma((d+1)/2)}.
 \]
  In particular, the limit is finite if and only if $A$ is a Cacciopoli set.
\end{thm}

\section*{Proofs}

\subsection{Harmonic ensemble: discrepancy and smooth statistics} We begin by proving the result on the asymptotic equidistribution of the harmonic ensemble.

\begin{proof}[Proof of Proposition~\ref{equidistribution}]
Observe that for a fixed geodesic ball $D\subset \mathbb M$
the function
$\mu \mapsto \mu(D)$ is 1-Lipschitz with respect to the total variation distance as a function on finite counting measures.
Therefore, we can apply the concentration result of Pemantle and Peres for determinantal point processes in \cite[Theorem 3.5]{PP14}, that states that 1-Lipschitz functionals on determinantal process are concentrated around the mean. More precisely,
there exists $A>0$ such that
for all $M>0$
\[
\mathbb P\left( |n_D-\mathbb E (n_D)| >\sqrt{M N\log N} \right)\le
A \exp\left(  
-\frac{M N \log N}{16\sqrt{ M N\log N}+2N)}
\right)\le
\frac{A}{N^{M/32}},
\]
for $N$ big enough depending on $M$. 

Now, for $N$ fixed, there exists a family 
$\mathcal A_N$ 
of cardinality of order $N^{d+1}$ containing geodesic balls such that, for 
any geodesic ball $D\subset \mathbb M$, there are
$D_1,D_2 \in \mathcal A_N$ with
$D_1\subset D \subset D_2$ and $V(D_2\setminus D_1)\lesssim \frac{1}{N}$. Indeed, consider all geodesic balls of radius $1/N$. By Vitali's covering lemma, there exists a countable sub-collection of disjoint balls
such that $\mathbb M$ is contained in the union of a fixed dilate of the balls. From volume considerations, the sub-collection has cardinality of order $N^d$. Let now $\mathcal A_N$ be the balls in the sub-collection and all the balls with the same center and  radius $j/N$ or all $j=0,\dots ,N.$

Observe that, for any $D\subset \mathbb M$ and corresponding
$D_1,D_2 \in \mathcal A_N$
\[
n_{D_1}-\mathbb E(n_{D_1})+\mathbb E(n_{D_1})-\mathbb E(n_{D_2})\le 
n_D-\mathbb E(n_D)\le 
n_{D_2}-\mathbb E(n_{D_2})+\mathbb E(n_{D_2})-\mathbb E(n_{D_1}).
\]
Let $K_L(x,y)$ be the kernel of the harmonic ensemble on $\mathbb M$ with $N=k_L$. From the
local Weyl's law \cite{Hor68}
$K_L(x,x)=N+\bigO{N^{1-\frac{1}{d}}}$, we get that for any geodesic ball $D\subset \mathbb M$
\[
|n_D-\mathbb E(n_D)|\le 1+\mathcal O(N^{-1/d})+
\max_{D_i\in \mathcal A_N}|n_{D_i}-\mathbb E(n_{D_i})|.
\]
By the union bound we get for $X_N\subset \mathbb M$ drawn from the harmonic ensemble 
\begin{align*}
&\mathbb P\left( \sup_{D\subset \mathbb M} |n_D-\mathbb E(n_D)|  >\sqrt{M N\log N} \right)\\
&\le \sum_{D \in \mathcal A_N}\mathbb P\left( |n_D-\mathbb E(n_D)| >\sqrt{(M+1) N\log N} \right)\lesssim  \frac{1}{N^2},
\end{align*}
for $M=32(d+3)$ and all $N$ big enough. 
From this bound and Borel-Cantelli's lemma it follows
\[
\mathbb P\left( \limsup_{N\to +\infty} \Big\{
\sup_{D\subset \mathbb M} |n_D-\mathbb E(n_D)| >\sqrt{M N\log N}\Big\} \right)=0,
\]
and therefore, there exist $N_0$ such that for $N>N_0$
\[
\sup_{D\subset M}\left| \frac{n_D}{N}-\frac{\mathbb{E}(n_D)}{N}\right|\le C \sqrt{\frac{\log N}{N}},
\]
with probability 1. 
Finally, we apply again the local Weyl's law to get
\[
\frac{\mathbb E(n_D)}{N} = \frac{1}{N}\int_D K_L(x,x)\, dV (x)\rightarrow V(D),\quad N\to +\infty.
\]
\end{proof}

In order to prove Theorem~\ref{variance asymptotic} regarding the asymptotic behavior of the variance for the smooth statistics of the harmonic ensemble, we will need the following result, which is an adaptation of \cite[Lemma 4.2]{Silvestre} to Riemannian manifolds.

\begin{prop}\label{silvestre}
Let $\mathbb{M}$ be a $d$-dimensional Riemannian manifold with injectivity radius $r_\mathbb{M}>0$. Denote by $B(x,r)$ the ball of radius $r$ centered at $x\in \mathbb{M}$ with respect to the geodesic distance $d(\cdot,\cdot)$, and assume that there exists a constant $c_\mathbb M\leq r_\mathbb{M}/2$ such that $V(B(x,r))\gtrsim r^d$ for $r\leq c_{\mathbb M}$. Let $K:\mathbb{M}\times \mathbb{M}\to \mathbb{R}$ be a symmetric kernel such that for any $r\in (0,r_\mathbb{M}/2)$ and any $x\in \mathbb{M}$,
\[    \int_{B(x,2r)\setminus B(x,r)}|K(x,y)|\, dV(y)\leq \Lambda r^{-2s},
\]
for some $\Lambda>0$ and $s\in(0,1)$. Then, there exists a constant $C=C(\Lambda,d,s)$ such that for any $g\in H^s(\mathbb{M})$,
\begin{align*}
   &\iint_{d(x,y)<2c_\mathbb{M}} |g(x)-g(y)|^2K(x,y)\, dV(x)dV(y)  \\ 
   &\leq C \iint_{d(x,y)<3c_\mathbb{M}/2}\frac{|g(x)-g(y)|^2}{d(x,y)^{d+2s}}\, dV(x)dV(y) \leq C [g]_{s,2}^2.
\end{align*}
\end{prop}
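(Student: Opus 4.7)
The plan is to adapt Silvestre's dyadic argument from \cite[Lemma 4.2]{Silvestre} to the Riemannian setting; the hypotheses on the injectivity radius and on the local volume growth are precisely what is needed to transfer the Euclidean ``midpoint ball'' construction. The second inequality $\le C[g]_{s,2}^2$ is automatic, since it simply enlarges the domain of integration to all of $\mathbb{M}\times\mathbb{M}$. Thus the content is the first inequality.

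Step 1 (dyadic decomposition). Choose $k_0\in\mathbb Z$ with $2^{-k_0}\le 2c_\mathbb{M}<2^{-k_0+1}$, set $r_k=2^{-k}$, and split
\[
\{(x,y):d(x,y)<2c_\mathbb{M}\}\subset \bigsqcup_{k\ge k_0} A_k,\qquad A_k=\{(x,y):r_k/2\le d(x,y)<r_k\},
\]
writing the integral as $\sum_k I_k$ with $I_k$ the piece over $A_k$.

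Step 2 (midpoint averaging). For $(x,y)$ with $r:=d(x,y)<2c_\mathbb{M}\le r_\mathbb{M}$, the minimizing geodesic from $x$ to $y$ is unique, so the midpoint $m_{xy}$ is well defined via the exponential map. Set $B_{xy}:=B(m_{xy},r/4)$. By the triangle inequality, for every $z\in B_{xy}$ one has $d(x,z),d(y,z)\le 3r/4$, so $B_{xy}\subset B(x,3r/4)\cap B(y,3r/4)$; since $r/4<c_\mathbb{M}/2$, the volume hypothesis gives $V(B_{xy})\gtrsim r^d$. Writing $g(x)-g(y)=(g(x)-g(z))-(g(y)-g(z))$, averaging over $z\in B_{xy}$ and applying Cauchy--Schwarz yields
\[
|g(x)-g(y)|^2\lesssim \frac{1}{r^d}\int_{B_{xy}}\bigl(|g(x)-g(z)|^2+|g(y)-g(z)|^2\bigr)dV(z).
\]

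Step 3 (bound on a dyadic piece). Plug this estimate into $I_k$. Since $A_k$ and $K$ are symmetric in $(x,y)$, the two terms contribute equally. Enlarging $B_{xy}\subset B(x,3r_k/4)$ and applying Fubini gives
\[
I_k\lesssim r_k^{-d}\int_\mathbb{M}\! dV(x)\!\int_{B(x,3r_k/4)}\!|g(x)-g(z)|^2 dV(z)\!\int_{A_k(x)}\!|K(x,y)|dV(y),
\]
where $A_k(x)=B(x,r_k)\setminus B(x,r_k/2)$. The kernel hypothesis applied at radius $r=r_k/2\le r_\mathbb{M}/2$ bounds the innermost integral by $\Lambda (r_k/2)^{-2s}$, so $I_k\lesssim \Lambda r_k^{-(d+2s)}\iint_{d(x,z)<3r_k/4}|g(x)-g(z)|^2 dV(z)dV(x)$.

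Step 4 (summation). Fubini and a dyadic rearrangement produce
\[
\sum_{k\ge k_0}I_k\lesssim \iint|g(x)-g(z)|^2\Biggl(\sum_{\substack{k\ge k_0\\ 3r_k/4\ge d(x,z)}}r_k^{-(d+2s)}\Biggr)dV(x)dV(z).
\]
The inner geometric series is dominated by its largest term, giving a bound $\lesssim d(x,z)^{-(d+2s)}$, and the constraint forces $d(x,z)<3r_{k_0}/4\le 3c_\mathbb{M}/2$, yielding the first inequality.

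The main obstacle is Step 2: in $\R^d$ Silvestre simply takes the Euclidean midpoint, but on $\mathbb{M}$ one must produce a ball of radius $\approx r/4$ whose points are within $3r/4$ of both $x$ and $y$ and whose volume is $\gtrsim r^d$. This is exactly what the combined hypotheses $c_\mathbb{M}\le r_\mathbb{M}/2$ and $V(B(x,r))\gtrsim r^d$ for $r\le c_\mathbb{M}$ guarantee, through the geodesic midpoint obtained from the exponential map. Once this substitute for the Euclidean midpoint is in hand, the remaining dyadic bookkeeping is essentially identical to \cite[Lemma 4.2]{Silvestre}.
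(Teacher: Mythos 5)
Your proof is correct and follows the same route as the paper's: dyadic decomposition of the near-diagonal region, geodesic-midpoint averaging to replace Silvestre's Euclidean midpoint ball, and resummation into the Gagliardo seminorm. The only (harmless) imprecision is the off-by-one in Step 1: with $2^{-k_0}\le 2c_{\mathbb M}$, the union $\bigsqcup_{k\ge k_0}A_k=\{0<d(x,y)<2^{-k_0}\}$ does not quite cover $\{d(x,y)<2c_{\mathbb M}\}$; the paper avoids this by choosing dyadic radii of the form $2^{-k}c_{\mathbb M}$, which also keeps every annulus squarely inside the range where the kernel hypothesis applies.
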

\begin{proof}
We may assume without loss of generality that the kernel is non-negative. For any $r>0$ let
\[
    I(r):=\iint_{r<d(x,y)<2r} |g(x)-g(y)|^2K(x,y)\, dV(x)dV(y),
\]
so that
\begin{equation}\label{dyadic sum}
   \iint_{d(x,y)<2c_\mathbb{M}} |g(x)-g(y)|^2K(x,y)\, dV(x)dV(y)=\sum_{k=-\infty}^0 I(2^kc_\mathbb{M} ).
\end{equation}
Given $x,y\in \mathbb{M}$ with $r<d(x,y)<2r\leq 2c_\mathbb{M}$, write $m(x,y)$ for their geodesic mean point, i.e., the unique point lying on the unique geodesic from $x$ to $y$ with $d(x,m(x,y))=d(y,m(x,y))$. Then,
\[
\begin{split}
    r^d|g(x)-g(y)|^2&\lesssim \int_{B(m(x, y),d(x,y)/4)}|g(x)-g(y)|^2\, dV(w)\\
    &\leq 2\int_{B(m(x, y),d(x, y)/4)}(|g(x)-g(w)|^2 +|g(w)-g(y)|^2)\, dV(w)\\
    &\leq 2\int_{r/4<d(x,w)<3r/2}|g(x)-g(w)|^2\, dV(w) \\&+2\int_{r/4<d(y,w)<3r/2}|g(y)-g(w)|^2\, dV(w).
\end{split}  
\]
Hence, for any $r\in (0,c_\mathbb{M})$ we have
\[
\begin{split}
    I(r)&\lesssim \frac{1}{r^d}\iint_{r<d(x,y)<2r}\int_{r/4<d(x,w)<3r/2}|g(x)-g(w)|^2dV(w) K(x,y)\, dV(x)dV(y)\\
    &\leq \frac{1}{r^d}\int_{\mathbb{M}}\Big(\int_{r/4<d(x,w)<3r/2}|g(x)-g(w)|^2dV(w)\Big)\Big(\int_{B(x,2r)\setminus B(x,r)} K(x,y)\,  dV(y)\Big)\, dV(x)\\
    &\leq \frac{\Lambda}{r^{d+2s}}\int_{\mathbb{M}}\int_{r/4\leq d(x,w)<3r/2}|g(x)-g(w)|^2\, dV(w)dV(x)\\
    &\leq \Big(\frac{3}{2}\Big)^{d+2s}\Lambda \int_{\mathbb{M}}\int_{r/4\leq d(x,w)<3r/2}\frac{|g(x)-g(w)|^2}{d(x,w)^{d+2s}}\, dV(w)dV(x)=:Q(r).
\end{split}
\]
Inserting this in \eqref{dyadic sum} we obtain
\[
\begin{split}
        &\iint_{d(x,y)<2c_\mathbb{M}} |g(x)-g(y)|^2K(x,y)\, dV(x)dV(y)\lesssim\sum_{k=-\infty}^0 Q(2^k c_\mathbb{M} )\\
        &= \iint_{d(x,y)<3c_\mathbb{M}/2}\frac{|g(x)-g(y)|^2}{d(x,y)^{d+2s}}\, dV(x)dV(y).
\end{split}
\]
\end{proof}

Observe that Proposition \ref{silvestre} applies in particular to all compact manifolds, since they are Ahlfors regular, and also to all (not necessarily compact) manifolds with sectional curvatures uniformly bounded above by $\kappa\in \mathbb R$. Indeed, as a consequence of the Günter-Bishop comparison theorem, in such a case $V(B(x,r))\gtrsim r^d$ for all $x\in \mathbb M$, and all $r\leq r_{\mathbb M}$, if $\kappa\leq 0$, or all $r\leq\min\{r_{\mathbb M},\pi/\sqrt{\kappa}\}$, if $\kappa>0$. In the proof of the next theorem, we will apply the result to $\mathbb M=\Sd$.

\begin{proof}[Proof of Theorem~\ref{variance asymptotic}]

Let $K_L(x,y)=K_L(d(x,y))$ be the kernel of the harmonic ensemble on $\Sd$ with $N$ points, where $N=\pi_L$ and $\pi_L$ is defined as in \eqref{eq:piL}. Then,
\[
    \lim_{N\to\infty} \frac{\var( n_A )}{N^{1-\frac{1}{d}}}=\left( \frac{\Gamma(d+1)}{2} \right)^{1-\frac{1}{d}}\lim_{L\to \infty}\frac{\var( n_A )}{L^{d-1}}.
\]
Then, by \eqref{equation_variance}, what we want to prove is
\[    \lim_{L\to\infty}\frac{1}{L^{d-1}}\iint |g(x)-g(y)|^2 K_L(x,y)^2 \, d\sigma(x)d\sigma(y)=\vertiii{g}_{1/2}^2,
\]
where $\vertiii{\cdot}_{1/2}^2$ is an equivalent (up to the dimension $d$) seminorm to $ [\,\cdot\,]_{1/2}^2$.

The kernel $K_L$ satisfies the following global pointwise bound, due to Hörmander, see \cite{HormanderRiesz},
\begin{equation}\label{eq:hormander}
   K_L(\cos\theta)\leq C\frac{L^d}{1+L\theta}.
\end{equation}
Localizing $\theta$ in the interval $[\varepsilon/L,\pi-\varepsilon/L]$, where $\varepsilon$ is any fixed number in $(0,\pi/2)$, also the following more precise estimate by Szegö (cf. for example \cite[Theorem 8.21.13]{szego}) is available,
\begin{equation}\label{szego interval}
    P_L^{(d/2,d/2-1)}(\cos{\theta})=L^{-1/2}k(\theta)h_L(\theta), \quad \varepsilon/L\leq\theta\leq\pi-\varepsilon/L,
\end{equation}
where
\[    k(\theta)=\sqrt{\pi^{-1}(\sin{\theta/2})^{-(d+1)}(\cos{\theta/2})^{-(d-1)}},
\]
and 
\[
h_L(\theta)=\cos\Big((L+d/2)\theta-\frac{\pi(d+1)}{4}\Big)+\frac{\mathcal O(1)}{L\sin\theta}.
\]
The bound for the error term, $\mathcal O(1)\leq C_\varepsilon$, is uniform in the interval $[\varepsilon/L,\pi-\varepsilon/L]$.

Fix $\varepsilon\in (0,1)$. For the near to the diagonal part of the above integral we can exploit the pointwise bound \eqref{eq:hormander} to get
\begin{equation}\label{omega 1}
\begin{split}
   &\frac{1}{L^{d-1}}\iint_{d(x,y)<\varepsilon/L} |g(x)-g(y)|^2 K_L(x,y)^2 \, d\sigma(x)d\sigma(y)\\
   &\leq  CL^{d-1}\iint_{d(x,y)<\varepsilon/L} \frac{|g(x)-g(y)|^2}{d(x,y)^2}\, d\sigma(x)d\sigma(y)\\
   &\leq C\varepsilon^{d-1} \iint_{d(x,y)<\varepsilon/L} \frac{|g(x)-g(y)|^2}{d(x,y)^{d+1}}\, d\sigma(x)d\sigma(y)\leq C\varepsilon^{d-1} [g]_{1/2}^2.
\end{split}    
\end{equation}
On the other hand, again by \eqref{eq:hormander},
\begin{equation}\label{omega 3}
\begin{split}
       &\frac{1}{L^{d-1}}\iint_{\pi-\varepsilon/L< d(x,y)\leq \pi} |g(x)-g(y)|^2 K_L(x,y)^2 \, d\sigma(x)d\sigma(y)\\
       &\leq \frac{2C}{\pi}L^{d-1} \iint_{\pi-\varepsilon/L< d(x,y)\leq \pi} |g(x)-g(y)|^2\, d\sigma(x)d\sigma(y)\\
       &\leq \frac{2C}{\pi}L^{d-1}4\iint_{\pi-\varepsilon/L< d(x,y)\leq \pi} |g(x)|^2d\sigma(x)\, d\sigma(y)\\
       &=\frac{8C}{\pi}L^{d-1}\sigma(\{\pi-\varepsilon/L< d(x,y)\leq \pi\})\int |g(x)|^2\, d\sigma(x)\\
       &\approx \Vert g\Vert_{L^2(\Sd)}^2\varepsilon^dL^{-1}\longrightarrow 0, \quad L\to \infty.
\end{split}
\end{equation}
We now turn our attention to the remaining part of the integral, that is, that on the region $\Omega_\varepsilon(L)=\{(x,y)\in \mathbb{S}^d\times\mathbb{S}^d: \ \varepsilon/L\leq d(x, y)\leq\pi-\varepsilon/L\}$. Recall that on this region, by \eqref{szego interval}, we have
\[   K_L(x,y)^2=\frac{c_{d,L}^2}{\sigma(\mathbb{S}^d)^2}L^{-1}k(x,y)^2h_L(x,y)^2.
\]
The key observation here is that the kernel $k(x,y)^2$ satisfies
\begin{equation}\label{consequence silvestre}
    \iint_{\Sd\times\Sd} |g(x)-g(y)|^2 k(x,y)^2\, d\sigma(x)d\sigma(y)=2\vertiii g_{1/2}^2,
\end{equation}
where $\vertiii{\cdot}_{1/2}^2$ is an equivalent (up to the dimension $d$) seminorm to $[\,\cdot\,]_{1/2}^2$. Indeed, on the one hand,
\[
    k(x,y)^2\geq \sin(d(x,y)/2)^{-(d+1)}\geq d(x,y)^{-(d+1)},
\]
which gives
\[
\iint_{\Sd\times\Sd}  |g(x)-g(y)|^2k(x,y)^2\, d\sigma(x)d\sigma(y)\geq [g]_{1/2}^2.
\]
On the other hand, for any $x\in\mathbb{S}^d$ and any $r>0$,
\[
\begin{split} \int_{r\leq d(x,y)\leq \pi}k(x,y)^2\,  dy&= \frac{1}{\pi}\int_{r\leq d(x,y)\leq \pi}\frac{d\sigma(y)}{(\sin{ d(x,y)/2})^{d+1}(\cos{ d(x,y)/2})^{d-1}}\\
&=\frac{\omega_{d-1}}{\omega_d\pi}\int_r^{\pi}\frac{(\sin\rho)^{d-1}\, d\rho}{(\sin{\rho/2})^{d+1}(\cos{\rho/2})^{d-1}}\\
&=\frac{\omega_{d-1}}{\omega_d2^{d-1}\pi}\int_r^{\pi}\frac{d\rho}{(\sin{\rho/2})^2}\\
&=\frac{\omega_{d-1}}{\omega_d2^{d-2}\pi}
 \cot (r/2)\leq \frac{\omega_{d-1}}{\omega_d2^{d-3}\pi r}.
\end{split}
\]
Hence, Proposition~\ref{silvestre} gives,
\begin{equation}\label{consequence silvestre upper bound} \iint_{\Sd\times\Sd}  |g(x)-g(y)|^2k(x,y)^2\, d\sigma(x)d\sigma(y)\leq C [g]_{1/2}^2,
\end{equation}
which proves \eqref{consequence silvestre}.

Now we observe that the kernel $h_L(x,y)^2$ satisfies
\begin{equation}\label{h_L}
    h_L(\theta)^2=\frac{1}{2}+\frac{1}{2}a_{d,L}(\theta)+b_L(\theta)\bigO{1},
\end{equation}
where
\[    a_{d,L}(\theta):=\cos\Big((2L+d)\theta-\frac{\pi(d+1)}{2}\Big)\in\{\pm\cos{((2L+d)\theta)},\pm\sin{((2L+d)\theta)}\},
\]
and
\[
    b_L(\theta)=\frac{2}{L\sin\theta}+\frac{1}{L^2\sin^2\theta}.
\]
Observe that $b_L\to 0$ as $L\to \infty$, pointwise. Moreover,
\[
    |b_L(\theta)|\leq \frac{3}{\sin^2(\theta)}=\frac{3}{2\sin^2(\theta/2)\cos^2(\theta/2)},
\]
so that, $|k( x,y)^2b_L( x,y)\mathcal O(1)|\lesssim C_\varepsilon k_{d+2}( x,y)^2$. But by \eqref{consequence silvestre upper bound},
\[
     \iint |g(x)-g(y)|^2 C_\varepsilon k^{d+2}( x,y)^2\, d\sigma(x)d\sigma(y)\lesssim C_\varepsilon[g]_{1/2}^2.
\]
Therefore, by the dominated convergence theorem,
\begin{equation}\label{b part}
  \lim_{L\to\infty}\iint |g(x)-g(y)|^2k( x,y)^2b_L( x,y)\bigO{1}\, d\sigma(x)d\sigma(y)=0. 
\end{equation}
Now, consider the map $\pi: \Sd\times\Sd \to [0,\pi]$ given by $\pi(x,y)=d(x,y)$. By the disintegration theorem, there exists a family of probability measures $\{\mu_\theta\}_{\theta\in[0,\pi]}$ on $\Sd\times\Sd$ such that $\mu_\theta$ is supported on $\pi^{-1}(\theta)$ and
\begin{align*}
    &\iint_{\Sd\times\Sd}  |g(x)-g(y)|^2k(d(x,y))^2\, d\sigma(x)d\sigma(y)\\
    & = \int_{[0,\pi]}\iint_{ d(x,y)=\theta}|g(x)-g(y)|^2k(\theta)^2\, d\mu_\theta(x,y)d\nu(\theta),
\end{align*}
where $\nu=\pi_*(\sigma\otimes \sigma)$, i.e., $d\nu(\theta)=B_d\sin^{d-1}(\theta)d\theta$. Hence, by \eqref{consequence silvestre upper bound}, the function
\[   f(\theta):=B_d\iint_{ d(x,y)=\theta}|g(x)-g(y)|^2k(\theta)^2\sin^{d-1}(\theta)d\mu_\theta(x,y),
\]
belongs to $L^1([0,\pi],d\theta)$. 
Therefore, again by the disintegration theorem
\begin{equation}\label{a part}
\iint_{\Sd\times\Sd} |g(x)-g(y)|^2k(d(x,y))^2a_{d,L}( d(x,y))\, d\sigma(x)d\sigma(y)=\int_{-\infty}^{\infty} \chi_{[0,\pi]}(\theta) f(\theta)a_{d,L}(\theta)\, d\theta,
\end{equation}
which tends to 0 as $L\to \infty$ by the Riemann-Lebesgue lemma.

Recalling that $c_{d,L}^2L^{-1}\sim L^{d-1}$ as $L\to\infty$, and using first \eqref{b part} and \eqref{a part} and then \eqref{consequence silvestre}, we get
\begin{equation}\label{omega 2}
\begin{split}
&\lim_{L\to\infty}\frac{1}{L^{d-1}}\iint_{\Omega_\varepsilon(L)} |g(x)-g(y)|^2 K_L(x,y)^2\,  d\sigma(x)d\sigma(y)\\
&= \lim_{L\to\infty}\iint_{\Omega_\varepsilon(L)} |g(x)-g(y)|^2 k(x,y)^2h_L(x,y)^2\,  d\sigma(x)d\sigma(y)\\
&=\lim_{L\to\infty}\frac{1}{2}\iint_{\Omega_\varepsilon(L)} |g(x)-g(y)|^2 k(x,y)^2\,  d\sigma(x)d\sigma(y)=\vertiii g_{1/2}^2.
\end{split}
\end{equation}
This immediately gives
\[
    \lim_{L\to\infty}\frac{1}{L^{d-1}}\iint |g(x)-g(y)|^2 K_L(x,y)^2 d\sigma(x)d\sigma(y)\geq \vertiii g_{1/2}^2.
\]
The reverse bound is obtained observing that
\[
    \lim_{L\to\infty}\frac{1}{L^{d-1}}\iint |g(x)-g(y)|^2 K_L(x,y)^2 d\sigma(x)d\sigma(y)\leq C\varepsilon^{d-1} [g]_{1/2}^2+\vertiii g_{1/2}^2,
\]
which follows by \eqref{omega 1}, \eqref{omega 3} and \eqref{omega 2},
and then letting $\varepsilon\to 0$.

\end{proof}

\subsection{Norm representations in \texorpdfstring{$\Sd$}{the sphere}}

For convenience, we set
\[
    I_{d,p}(\varrho, f):=\iint_{\Sd\times\Sd}\frac{|f(x)-f(y)|^p}{d(x,y)^p}\varrho(d(x,y))\,  d\sigma(x)d\sigma(y),
\]
and use this notation throughout this section. To prove Theorem~\ref{thm:davila sphere}, we first establish an upper bound for $I_{d,p}(\varrho, f)$ which extends \cite[Proposition A.1]{pegon}.

\begin{prop}\label{modified pegon}
    Let $d\geq 1$ and $\varrho: (0,\pi)\to\mathbb{R^+}$ a non-negative measurable function, and set
        \[
          Q(\varrho)=\frac{\omega_{d-1}}{\omega_{d}}\int_0^\pi \varrho(r)\sin^{d-1}(r)\, dr.
    \]
If $f \in W^{1,p}(\mathbb S^d)$, then for any $1\leq p<\infty$,
    \begin{equation}\label{pointwise bound pegon}       I_{d,p}(\varrho, f)\leq K_{d,p}Q(\varrho)\int_{\Sd} |\nabla f(z)|^p\, d\sigma(z).
    \end{equation}
Moreover, if $f\in BV(\Sd)$,
     \[
    I_{d,1}(\varrho, f)\leq K_{d}Q(\varrho)[f]_{BV}.
    \]
\end{prop}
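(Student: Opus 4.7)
The plan is to prove the $W^{1,p}$ bound by a direct computation for smooth $f$, then extend to $W^{1,p}(\Sd)$ by density and to $BV(\Sd)$ via the approximation \eqref{miranda approximation}. The three tools will be polar coordinates on $\Sd$, Jensen's inequality, and the invariance of the natural measure on the unit tangent bundle $T^1\Sd$ under the geodesic flow (Liouville's theorem).

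I would first assume $f \in \mathcal C^1(\Sd)$ and parameterize $x \in \Sd\setminus\{-y\}$ as $x = \exp_y(r\xi)$ with $r = d(x,y) \in (0,\pi)$ and $\xi$ on the unit sphere of $T_y\Sd$; then $d\sigma(x) = (\omega_{d-1}/\omega_d)\sin^{d-1}(r)\,dr\,d\sigma_*(\xi)$, with $\sigma_*$ the normalized surface measure on $\mathbb{S}^{d-1}$. The fundamental theorem of calculus along the geodesic $\gamma_{y,\xi}(t) := \exp_y(t\xi)$ gives $f(x)-f(y) = \int_0^r \partial_\xi f(\gamma_{y,\xi}(t))\,dt$, and Jensen's inequality with respect to the probability measure $dt/r$ on $[0,r]$ yields
\[
\frac{|f(x)-f(y)|^p}{r^p} \leq \frac{1}{r}\int_0^r |\partial_\xi f(\gamma_{y,\xi}(t))|^p\,dt.
\]
I would insert this bound into $I_{d,p}(\varrho,f)$ and interchange the order of integration so that the $(y,\xi)$-integration is innermost. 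The decisive step is to invoke Liouville's theorem: the measure $d\sigma(y)\,d\sigma_*(\xi)$ on $T^1\Sd$ is invariant under the geodesic flow $\phi_t(y,\xi) = (\gamma_{y,\xi}(t), \gamma_{y,\xi}'(t))$, so for every fixed $t$
\[
\int_{\Sd}\!\int_{\mathbb S^{d-1}} |\partial_\xi f(\gamma_{y,\xi}(t))|^p\, d\sigma_*(\xi)\,d\sigma(y) = \int_{\Sd}\!\int_{\mathbb S^{d-1}} |\langle \nabla f(z),\eta\rangle|^p\,d\sigma_*(\eta)\,d\sigma(z) = K_{d,p}\int_{\Sd}|\nabla f(z)|^p\,d\sigma(z),
\]
where the last equality uses the rotational symmetry of $\sigma_*$ and the definition of $K_{d,p}$. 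Since this quantity is independent of $t$, the inner $t$-integral cancels $1/r$, and what is left of the $r$-integration produces exactly the factor $Q(\varrho)$, yielding \eqref{pointwise bound pegon}.

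The extension to $f \in W^{1,p}(\Sd)$ will follow from density of smooth functions, passing to the limit via Fatou on the left-hand side (along an a.e.\ convergent subsequence) together with the $L^p$-convergence of gradients on the right. For $f \in BV(\Sd)$, I would apply the $p=1$ estimate to each element of the smooth sequence $\{f_j\}$ provided by \eqref{miranda approximation}; since $f_j \to f$ in $L^1(\Sd)$ and $\int_{\Sd}|\nabla f_j|\,d\sigma \to [f]_{BV}$, Fatou's lemma on the left-hand side gives the BV bound. The main obstacle I anticipate is justifying the Liouville-invariance step carefully: one must verify that the substitution $(y,\xi) = \phi_{-t}(z,\eta)$ is a measurable bijection preserving $d\sigma\otimes d\sigma_*$ on $T^1\Sd$, which, although standard, is the nontrivial geometric input of the argument.
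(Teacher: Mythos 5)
Your argument is correct and reaches the same conclusion, but the core computation is organized quite differently from the paper's. The paper follows Pegon's \cite[Proposition A.1]{pegon} closely: after Jensen and Fubini (with the geodesic parameterized at constant speed on $[0,1]$), it performs two explicit changes of variables — first $x\mapsto z=\gamma_{yx}(t)$ for fixed $(y,t)$, realized as the radial dilation $T_{y,t}=\Phi_y\circ\tau_t\circ\Phi_y^{-1}$ whose Jacobian is computed by hand, and then, after Fubini, $y=\Phi_z(\xi,r)$ — and the product of Jacobians is shown to combine into exactly $\tfrac{1}{t}\sin^{d-1}(r/t)$, whence the factor $Q(\varrho)$ emerges after the $r$- and $t$-integrations. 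You replace both change-of-variable steps by a single invocation of Liouville invariance of $d\sigma\otimes d\sigma_*$ on $T^1\Sd$ under the geodesic flow $\phi_t$, recognizing $\partial_\xi f(\gamma_{y,\xi}(t))=\langle\nabla f,\eta\rangle\circ\phi_t(y,\xi)$ so that the inner $(y,\xi)$-integral is $t$-independent and equals $K_{d,p}\int|\nabla f|^p\,d\sigma$; this cancels the $1/r$ against $\int_0^r dt$ and leaves $Q(\varrho)$ directly. Your route is cleaner and more conceptual, at the cost of importing one geometric fact (which on $\Sd$ is particularly easy: $\phi_t$ commutes with the transitive $SO(d+1)$-action on $T^1\Sd$ and $\sigma\otimes\sigma_*$ is the unique invariant probability, so $\phi_t^*(\sigma\otimes\sigma_*)=\sigma\otimes\sigma_*$), whereas the paper's proof is entirely self-contained but requires a delicate Jacobian bookkeeping. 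The density argument for $W^{1,p}$ and the use of \eqref{miranda approximation} plus Fatou for $BV$ are identical to the paper's.
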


\begin{proof}
Assume $f\in \mathcal{C}^1(\Sd)$. For $x,y\in \Sd$, let $\gamma_{yx}:[0,1]\to\Sd$ be the constant speed parametrization of a geodesic on the sphere such that $\gamma_{yx}(0)=y$, $\gamma_{yx}(1)=x$, and $| \gamma'_{yx}(t)|=d(x,y)$, for all $t\in[0,1]$. Observe that the geodesic is unique outside the negligible set of points $(x,y)$ in $\Sd\times\Sd$ for which $x=\pm y$. Set $e_{xy}=\gamma'_{yx}(t)/| \gamma'_{yx}(t)|$. Then, since
\[
    f(x)-f(y)=\int_0^1 \nabla f(\gamma_{yx}(t))\cdot \gamma'_{yx}(t) dt,
\]
we can write
\begin{equation}\label{pegon formula}
   I_{d,p}(\varrho, f)=\iint_{\Sd\times\Sd}\Big|\int_0^1 \nabla f(\gamma_{yx}(t))\cdot e_{xy}(t)dt\Big|^p \varrho(d(x,y))\,  d\sigma(x)d\sigma(y).
\end{equation}
Then, by Jensen's inequality and Fubini's theorem,
\[
\begin{split}
   I_{d,p}(\varrho, f)&\leq \iint_{\Sd\times\Sd}\int_0^1|\nabla f(\gamma_{yx}(t))\cdot e_{xy}(t)|^p\, dt\varrho(d(x,y))\, d\sigma(x)\, d\sigma(y)\\
    &=\int_{\Sd}\int_0^1\int_{\Sd}|\nabla f(\gamma_{yx}(t))\cdot e_{xy}(t)|^p \varrho(d(x,y))\, d\sigma(x)\, dt\, d\sigma(y).
\end{split}
\]

The next thing we want to do is to perform a change of variable in the inner integral from $x$ to the variable $z=\gamma_{yx}(t)$. Observe that $\gamma'_{yx}(t)$ is a constant multiple of the vector $\langle y,\gamma_{yx}(t)\rangle\gamma_{yx}(t)-y$, so that with respect to the new variable we have
\[
    e_{xy}(t)=\frac{\langle y,z\rangle z-y}{| \langle y,z\rangle z-y |}.
\]
We now make the change of variable explicit. Let $\Phi_y:\mathbb{S}^{d-1}\times (0,\pi)\to \Sd\setminus\{y\}$ be the spherical coordinates on the sphere having $y$ as the North Pole, i.e., if $p=\Phi_y(\xi,r)$, then $r=d(p,y)$ and $\xi$ is
the unique point on the equator of the sphere such that $p,\xi$ and $y$ lie on the same geodesic. If $x=\Phi_y(\xi,d(x,y))$ in these coordinates, then it is clear that $z=\Phi_y(\xi,d(y,z))$, so that
\begin{equation}\label{eq: change of variable}
x=\Phi_y(\xi,d(y,z)/t)=\Phi_y\circ \tau_t (\xi,d(y,z))= \Phi_y\circ \tau_t \circ \Phi_y^{-1}(z)=:T_{y,t}(z),
\end{equation}
where $\tau_t:\mathbb{S}^{d-1}\times \mathbb{R}_+\to \mathbb{S}^{d-1}\times \mathbb{R}_+$ is the map given by
\[
    \tau_t=\begin{bmatrix}
I & \mathbf{0} \\
\mathbf{0}^T & 1/t
\end{bmatrix}.
\]
Then, we have,
\begin{equation}\label{first change of variable}
\begin{split}
    &I_{d,p}(\varrho,f)\leq \int_{\Sd}\int_0^1\int_{\Sd}|\nabla f(\gamma_{yx}(t))\cdot e_{xy}(t)|^p\varrho(d(x,y))\, d\sigma(x)dtd\sigma(y)\\
    &=\int_{\Sd}\int_0^1\int_{d(y,z)\leq t\pi}\Big|\nabla f(z)\cdot \frac{\langle y,z\rangle z-y}{| \langle y,z\rangle z-y |}\Big|^p\varrho(d(y,z)/t)|\det \mathcal{J}T_{y,t}(z)|\, d\sigma(z)dtd\sigma(y)\\
   &= \int_{\Sd}\int_0^1\int_{d(y,z)\leq t\pi}\Big|\nabla f(z)\cdot \frac{\langle y,z\rangle z-y}{| \langle y,z\rangle z-y |}\Big|^p\varrho(d(y,z)/t)|\det \mathcal{J}T_{y,t}(z)|\, d\sigma(y)dtd\sigma(z),
\end{split}
\end{equation}
where in the last line we simply applied Fubini's theorem. Next step is to apply the change of variable $y=\Phi_z(\xi,r)$ in the inner integral. It is easily seen that under this change
\[
    \frac{\langle y,z\rangle z-y}{| \langle y,z\rangle z-y |}=\Phi_z(-\xi,\pi/2).
\]
Moreover, by \eqref{eq: change of variable}, we have
\[
    |\det \mathcal{J}T_{y,t}(z)|=\frac{1}{t}\Big|\frac{\det\mathcal{J}\Phi_y(\xi,d(y,z)/t)}{\det\mathcal{J}\Phi_y(\xi,d(y,z))}\Big|=\frac{1}{t}\Big|\frac{\det\mathcal{J}\Phi_y(\xi,r/t)}{\det\mathcal{J}\Phi_y(\xi,r)}\Big|=\frac{1}{t}\Big|\frac{\det\mathcal{J}\Phi_z(\xi,r/t)}{\det\mathcal{J}\Phi_z(\xi,r)}\Big|,
\]
where in the last equality we used the fact that the Jacobian determinant of the spherical change of variables only depends on the distance of each point to the North Pole, and not on the choice of the pole itself. Hence, since $\omega_{d} d\sigma(y)=\omega_{d-1}|\det\mathcal{J}\Phi_z(\xi,r)|d\sigma(\xi) dr$, we obtain 
\[
   |\det \mathcal{J}T_{y,t}(z)|d\sigma(y)=\frac{\omega_{d-1}}{\omega_{d}}\frac{1}{t}|\det\mathcal{J}\Phi_y(\xi,r/t)|d\sigma(\xi) dr = \frac{\omega_{d-1}}{\omega_{d}}\frac{1}{t}\sin^{d-1} (r/t) d\sigma(\xi) dr.
\]
By rewriting the last line in \eqref{first change of variable} as
\[
\begin{split}
    &\frac{\omega_{d-1}}{\omega_{d}}\int_{\Sd}\int_0^1 \int_0^{t\pi}\frac{1}{t}\varrho(r/t)\sin^{d-1} (r/t)\int_{\mathbb{S}^{d-1}}\Big| \nabla f(z)\cdot \Phi_z(-\xi,\pi/2)\Big|^p \, d\sigma(\xi) drdtd\sigma(z)\\
    &=\frac{\omega_{d-1}}{\omega_{d}}K_{d,p} \int_{\Sd} |\nabla f(z)|^p \, d\sigma(z) \int_0^1 \int_0^{t\pi}\frac{1}{t} \varrho(r/t)\sin^{d-1} (r/t)  \, drdt\\
    &=\frac{\omega_{d-1}}{\omega_{d}}K_{d,p} \int_{\Sd} |\nabla f(z)|^p \, d\sigma(z) \int_0^1 \int_0^{\pi} \varrho(r)\sin^{d-1} (r)\,   dr dt = K_{d,p} Q(\varrho)\int_{\Sd} |\nabla f(z)|^p\,  d\sigma(z),
\end{split}
\]
which proves \eqref{pointwise bound pegon} for smooth functions.

If $f\in W^{1,p}(\Sd)$, by the density of $\mathcal{C}^1(\Sd)$ in $W^{1,p}(\Sd)$ one can choose a sequence $f_j$ of functions in $\mathcal{C}^1(\Sd)$ which converges to $f$ in $W^{1,p}(\Sd)$, and pointwise. Then, writing \eqref{pointwise bound pegon} for $f_j$, taking the limit in $j$ and applying Fatou's lemma gives the result for $W^{1,p}(\Sd)$.

Finally, if $ f \in \mbox{BV}(\mathbb S^d)$, there exists a sequence $\{f_j\}$ with the properties prescribed in \eqref{miranda approximation}. One can further assume that $f_j$ converges to $f$ pointwise. Hence,  writing \eqref{pointwise bound pegon} for $f_j$, taking the limit in $j$ and applying Fatou's lemma gives the result for $\mbox{BV}(\mathbb S^d)$.
\end{proof}
We are now in the position to prove the norm representation result on the unit sphere.

\begin{proof}[Proof of Theorem~\ref{thm:davila sphere}]
Assume first that $f\in\mathcal{C}^1(\Sd)$. We fix $\delta\in (0,\pi)$, and write $I_{d,p}(\varrho_L, f)$ as a sum of a local and a global term,
\[
    I_{d,p}(\varrho_L, f)=I_{d,p}(\varrho_L\chi_{(0,\delta)}, f)+I_{d,p}(\varrho_L\chi_{(\delta,\infty)}, f).
\]
For the global term we have
\[
    I_{d,p}(\varrho_L\chi_{(\delta,\infty)}, f)\leq \frac{2\Vert f\Vert_\infty^p}{\delta^p}\int_{\Sd}\int_{\Sd\setminus B(y,\delta)}\varrho_L(d(x,y))\,  d\sigma(x)d\sigma(y),
\]
which, by the properties of the mollifiers, tends to zero as $L$ tends to infinity. Then, it is enough to prove that the local term tends to $K_{d,p}\int_{\Sd}|\nabla f|^p$ as $L\to\infty$.

By \eqref{pegon formula} we can write
\begin{equation}\label{eq: local integral sphere}
    I_{d,p}(\varrho_L\chi_{(0,\delta)}, f)=\iint_{d(x,y)<\delta}\Big(|\nabla f(y)\cdot e_{xy}(0)|^p +F_p(x,y)\Big)\varrho_L(d(x,y))\,  d\sigma(x)d\sigma(y),
\end{equation}
where
\[
        F_p(x,y)=\Big|\int_0^1 \nabla f(\gamma_{yx}(t))\cdot e_{xy}(t)\, dt\Big|^p-|\nabla f(y)\cdot e_{xy}(0)|^p=:|b|^p-|a|^p.
\]
Using the same notation as in the proof of Proposition \ref{modified pegon} for spherical changes of variable, if $x=\Phi_y(\xi,d(x,y))$, then we have $e_{xy}(0)=\Phi_y(\xi,\pi/2)$, so that
\[
\begin{split}
        &\iint_{d(x,y)<\delta}|\nabla f(y)\cdot e_{xy}(0)|^p \varrho_L(d(x,y))\,  d\sigma(x)d\sigma(y)\\
        &=\frac{\omega_{d-1}}{\omega_d}\int_{\Sd}\int_0^\delta\int_{\mathbb S^{d-1}}\Big|\nabla f(y)\cdot \Phi_y(\xi,\pi/2)\Big|^pd\sigma(\xi)\varrho_L(r)\sin^{d-1}r \, dr d\sigma(y)\\
        &=K_{d,p}\int_{\Sd}|\nabla f(y)|^p\, d\sigma(y)\Big(\frac{\omega_{d-1}}{\omega_d}\int_0^\delta\varrho_L(r)\sin^{d-1}r\, dr \Big)\\
        &=K_{d,p}\int_{\Sd}|\nabla f(y)|^p\, d\sigma(y)\int_{B(N,\delta)}\varrho_L(d(x,N))d\sigma(x)\longrightarrow K_{d,p}\int_{\Sd}|\nabla f(y)|^p\, d\sigma(y), \quad L\to\infty,
\end{split}
\]
where $N$ is an arbitrarily chosen point on $\Sd$ and the limit is justified by \eqref{def_mollifier}.
In view of \eqref{eq: local integral sphere}, to prove the result for smooth functions it is then enough to show that
\begin{equation}\label{eq:F_p limit}    \lim_{L\to\infty}\lim_{\delta\to\infty}\iint_{d(x,y)<\delta}F_p(x,y)\varrho_L(d(x,y))\,  d\sigma(x)d\sigma(y)=0.
\end{equation}
We will actually prove that the above holds for any $L$, without even taking the limit in $L$. Recalling that $e_{xy}(t)$ is the normalized tangent vector of the geodesic $\gamma_{yx}$ at time $t$, by Cauchy–Schwarz inequality both $|a|$ and $|b|$ are trivially bounded by $\|\nabla f\|_{\infty}$. Then, by the fundamental theorem of calculus and Jensen's inequality, we get
\[
\begin{split}
      |F_p(x,y)|&=\Big|\int_a^b\Big(\frac{d}{d\xi}|\xi|^p\Big)d\xi\Big|=\Big|p\int_a^b\xi|\xi|^{p-2}d\xi\Big|\\
      &\leq p\int_a^b|\xi|^{p-1}\, d\xi
    =p|b-a|\int_0^1 |(1-\lambda)a+\lambda b|^{p-1}\, d\lambda\\
    &\leq p|b-a|\int_0^1 (1-\lambda)|a|^{p-1}+\lambda |b|^{p-1}d\lambda\leq p|b-a|\|\nabla f\|_{\infty}^{p-1}.
\end{split}
\]
The term $|b-a|$ can be estimated, by triangular and Cauchy–Schwarz inequalities, by
\[
\begin{split}
      |b-a|&=  \Big|\int_0^1 \big(\nabla f(\gamma_{yx}(t))\cdot e_{xy}(t)-\nabla f(y)\cdot e_{xy}(0)\big)\, dt\Big| \\
      &\leq \int_0^1 |\nabla f(\gamma_{yx}(t))-\nabla f(y)|+ |\nabla f(y)||e_{xy}(t)-e_{xy}(0)| \, dt.
\end{split}
\]
By the uniform continuity of $\nabla f$, for any $\varepsilon>0$, we can choose $\delta$ small enough so that, for $d(x,y)<\delta$,
\[
    \max\{|e_{xy}(t)-e_{xy}(0)|\|\nabla f\|_\infty,  |\nabla f(\gamma_{yx}(t))-\nabla f(y)|\}<\frac{\varepsilon}{2},
\]
which gives $|b-a|<\varepsilon$ and, in turn, $|F_p(x,y)|<p\varepsilon\|\nabla f\|_{\infty}^{p-1}$. Hence, recalling that for every $L$ and every $y\in \Sd$, $\int_{\Sd}\varrho_L(d(x,y))\,  d\sigma(x)=1$, we get
\[
    \iint_{d(x,y)<\delta}|F_p(x,y)| \varrho_L(d(x,y))\,  d\sigma(x)d\sigma(y)<p\varepsilon\|\nabla f\|_{\infty}^{p-1}.
\]
By the arbitrariness of $\varepsilon$, we get \eqref{eq:F_p limit}, and hence the desired result for smooth functions, namely,
\begin{equation}\label{pegon for smooth functions}
    \lim_{L\to \infty}I_{d,p}(\varrho_L, f)=K_{d,p}\int_{\Sd} |\nabla f(z)|^pd\sigma(z), \quad f\in\mathcal{C}^1(\Sd).
\end{equation}
Let now $f\in L^p(\Sd)$, $1\leq p<\infty$, and define the regularized functions
\[
f_\varepsilon(x)=\int_{SO(d+1)} \phi_\varepsilon(\nu N)f(\nu^{-1}x)\, dH(\nu),
\]
    where $\phi_\varepsilon$ are smooth functions supported on a spherical cap of radius $\varepsilon>0$ centered at the North Pole $N\in \Sd$, depending only on the distance to the North Pole. They are normalized to have $\int_{SO(d+1)} \phi_\varepsilon(\nu N)dH(\nu)=1$, where $H$ is the Haar measure on $SO(d+1)$, and we can assume that $f_\varepsilon\longrightarrow f$ in $L^p(\Sd)$ as $\varepsilon\to 0$. Then we get,

\begin{align*}
I_{d,p}(\varrho_L, f_\varepsilon)&
\le
\iint_{\Sd\times\Sd}\frac{\varrho_L(d(x,y))}{d(x,y)^p}\int_{SO(d+1)} 
\phi_\varepsilon(\nu N)|f(\nu^{-1}x)-f(\nu^{-1}y)|^p \, dH(\nu)  d\sigma(x)d\sigma(y)    
\\
&
=
\int_{SO(d+1)} \phi_\varepsilon(\nu N) \iint_{\Sd\times\Sd}\frac{|f(\nu^{-1}x)-f(\nu^{-1}y)|^p}{d(x,y)^p} 
 \varrho_L(d(x,y)) \, d\sigma(x)d\sigma(y)     dH(\nu)
\\
&
=
\iint_{\Sd\times\Sd}\frac{|f(x)-f(y)|^p}{d(x,y)^p} \varrho_L(d(x,y))
  \, d\sigma(x)d\sigma(y)=I_{d,p}(\varrho_L, f),   
\end{align*}
because $d(\nu^{-1}x,\nu^{-1}y)=d(x,y)$ 
and the measure $\sigma$ is invariant under rotations. Taking the limit in $L$ on both sides, and using \eqref{pegon for smooth functions} for the smooth functions $f_\varepsilon$, we get
\begin{equation}\label{eq:smooth bound}
    K_{d,p}\int_{\Sd}|\nabla f_\varepsilon|^p\leq \liminf_{L\to\infty}I_{d,p}(\varrho_L, f).
\end{equation}
Observe that if the right hand-side of \eqref{eq:smooth bound} is finite, then the sequence $f_\varepsilon$ is uniformly bounded in $W^{1,p}(\Sd)$. Then, if $1<p<\infty$, there exists a subsequence of $f_\varepsilon$ admitting a weak limit, which must coincide with $f$. This proves that the limit in \eqref{limit_pegon_p} is not finite if $1<p<\infty$ and $f\not \in W^{1,p}(\Sd)$. Suppose then that $f\in W^{1,p}(\Sd)$, $1\leq p<\infty$. In this case $(\nabla f)_\varepsilon$ converges to $\nabla f$ in $L^p(\Sd)$ and, passing to a subsequence, pointwise. Then, by Fatou's lemma and \eqref{eq:smooth bound},
\[
    K_{d,p}\int_{\Sd} |\nabla f|^p\leq  K_{d,p}\liminf_{\varepsilon\to 0}\int_{\Sd} |\nabla f_\varepsilon|^p\leq\liminf_{L\to\infty}I_{d,p}(\varrho_L, f).
\]
Proposition~\ref{modified pegon} applied to each mollifier $\varrho_L$ gives the reverse bound
    \[
        \limsup_{L\to \infty}I_{d,p}(\varrho_L, f)\le K_{d,p}\int_{\Sd} |\nabla f|^p,
    \]
which completes the proof of \eqref{limit_pegon_p}.

Let now $p=1$, and $\phi$ any $C^\infty_c$ one-form with $\|\phi\|\le 1$. Since $f_\varepsilon\longrightarrow f$ in $L^1(\Sd)$, then
    \[
        \int_{\Sd} f(x) \Div \phi(x)\,  dV(x)=\lim_{\varepsilon\to 0}\int_{\Sd} f_\varepsilon(x) \Div \phi(x)\,  dV(x)\leq \liminf_{\varepsilon\to 0}[f_\varepsilon]_{BV}.
    \]
Taking the supremum over all $\phi$ as above we get
\[
    K_d[f]_{BV}\leq \liminf_{L\to\infty}I_{d,p}(\varrho_L, f),
\]
which shows that the limit in \eqref{limit_pegon_BV} is not finite if $f\not \in BV(\Sd)$. Moreover, if $f\in BV(\Sd)$, applying Proposition~\ref{modified pegon} to each mollifier $\varrho_L$, we obtain the reverse bound
    \[
        \limsup_{L\to \infty}I_{d,1}(\varrho_L, f)\le K_{d}[f]_{BV(\Sd)},
    \]
which completes the proof of \eqref{limit_pegon_BV}.
\end{proof}

\subsection{Harmonic ensemble: rough statistics}

Given Theorem~\ref{thm:davila sphere}, in order to prove Theorem~\ref{variance asymptotic rough} it is enough to show that if the variance of the rough statistics of the harmonic ensemble is written as in \eqref{eq:general rough} with $C_{rough}(N)=C_d N^{1-1/d}\log N$, then the functions $\varrho_N$ appearing in the right-hand side of \eqref{eq:general rough} are radial mollifiers on $\Sd$.

\begin{proof}[Proof of Theorem~\ref{variance asymptotic rough}]

Let $K_L(x,y)=K_L(d(x,y))$ be the kernel of the harmonic ensemble on $\Sd$ with $N$ points, where $N=\pi_L$ and $\pi_L$ is defined as in \eqref{eq:piL}. Then,
\begin{equation}\label{N to L harmonic rough}
    \lim_{N\to\infty} \frac{\var( n_A )}{N^{1-\frac{1}{d}}\log N}=\frac{1}{d}\left( \frac{\Gamma(d+1)}{2} \right)^{1-\frac{1}{d}}\lim_{L\to \infty}\frac{\var( n_A )}{L^{d-1}\log L}.
\end{equation}
By \eqref{equation_variance} we have 
for $A\subset \mathbb S^d$ a Caccioppoli set
\begin{align*}
\var( n_A )&= \frac{1}{2}\iint_{\Sd\times\Sd}|\chi_A(x)-\chi_A(y)| K_L(d(x,y))^2 \, d\sigma(x)d\sigma(y)
\\       
&
=
\frac{C_L}{2}
\iint_{\Sd\times\Sd}\frac{|\chi_A(x)-\chi_A(y)|}{d(x,y)} \varrho_L(x,y) \, d\sigma(x)d\sigma(y),
\end{align*}
where $\varrho_L(x,y)=C_{L}^{-1} K_L(d(x,y))^2  d(x,y)$ and
\[
C_L=\int_{\mathbb S^d}K_L(d(x,y))^2  d(x,y) d\sigma(x).
\]

It is enough to show that $\{\varrho_L\}_{L\ge 0}$ is a sequence of radial mollifiers on $\Sd$ as in \eqref{def_mollifier} and that
    \begin{equation}\label{C_L asymptotic harmonic}
        \lim_{L\to \infty} \frac{C_L}{L^{d-1}\log L}=\frac{4\omega_{d-1}\Gamma(\frac{d}{2}+1)^2}{ \pi \omega_{d} \Gamma(d+1)^2}=:B_d,
    \end{equation}
because then Theorem~\ref{thm:davila sphere}, together with \eqref{N to L harmonic rough}, implies the desired result,
\[
\lim_{N\to\infty}\frac{\var( n_A )}{N^{1-\frac{1}{d}}\log N}=
\frac{1}{d}\left( \frac{\Gamma(d+1)}{2} \right)^{1-\frac{1}{d}}\frac{ B_d K_d}{2} [\chi_A]_{BV}.
\]
We first prove \eqref{C_L asymptotic harmonic}. By rotation invariance and Szeg\"o's estimate \eqref{szego interval} we have
\[
C_L=\frac{\pi_L^2 }{ \binom{L+d/2}{L}^2}\frac{\omega_{d-1}}{\omega_d}\int_0^\pi \theta P_L^{(1+\lambda,\lambda)}(\cos \theta)^2 \sin^{d-1} \theta\,  d\theta.
\]
We split the integral on the right in the sum of the three integrals on the intervals $[0,1/L]$, $[1/L,\pi-1/L]$, and $[\pi-1/L,\pi]$. The first and the third are of the order $\bigO{L^{-1}}$, as one can easily see  using the well known bounds $P_L^{(1+\lambda,\lambda)}(t)^2\lesssim L^{d-2}$ for $-1\le t\le 0$ and
$P_L^{(1+\lambda,\lambda)}(t)^2\lesssim L^{d}$ for $0\le t\le 1$, see \cite[7.32]{szego}.

For the middle term, by \eqref{szego interval} and \eqref{h_L} we have
\begin{align*}
&\int_{\frac{1}{L}}^{\pi-\frac{1}{L}} \theta P_L^{(1+\lambda,\lambda)}(\cos \theta)^2 \sin^{d-1} \theta\,  d\theta=\frac{2}{\pi L}\int_{\frac{1}{L}}^{\pi-\frac{1}{L}} 
\frac{\theta}{\sin^2 \frac{\theta}{2}}h_L^2(\theta)\,  d\theta\\
&=
\frac{1}{\pi L}\int_{\frac{1}{L}}^{\pi-\frac{1}{L}}  
\frac{\theta}{\sin^2 \frac{\theta}{2}}\,  d\theta
+
\frac{1}{\pi L}\int_{\frac{1}{L}}^{\pi-\frac{1}{L}}  
\frac{\theta}{\sin^2 \frac{\theta}{2}} \cos\bigl(2(L+\frac{d}{2})\theta-c_d\bigr)\,  d\theta
\\
&
+\frac{2}{\pi L}
\int_{\frac{c}{L}}^{\pi-\frac{c}{L}}  
\frac{\theta}{\sin^2 \frac{\theta}{2}} \frac{\cos\bigl(2(L+\frac{d}{2})\theta-c_d\bigr)}{L \sin \theta}\,  d\theta+o(1),
\end{align*}
as $L\to\infty$. For the first summand on the right, using
\begin{equation}\label{integral_middle}
\int_{\varepsilon}^{\pi-\varepsilon}\frac{\theta}{\sin^2 \frac{\theta}{2}} \, d\theta=
4+\log 2 -\pi \varepsilon-\log \varepsilon+\bigO{\varepsilon^2}, \quad \varepsilon\to 0,
\end{equation}
we obtain that 
\[
\frac{1}{\pi L}\int_{\frac{1}{L}}^{\pi-\frac{1}{L}}  
\frac{\theta}{\sin^2 \frac{\theta}{2}}\, d\theta=\frac{1}{\pi}\frac{\log L}{L}+\bigO{1}, 
\quad L\to +\infty.
\]
The second integral can be shown to be $\bigO{L}$ when $L\to +\infty$
integrating by parts and the last one is again of the same order. All together,
\[
C_L\sim \frac{1}{\pi}\frac{\pi_L^2 }{ \binom{L+d/2}{L}^2}\frac{\omega_{d-1}}{\omega_d}\frac{\log L}{L}\sim \frac{4}{\pi}\frac{\Gamma(d/2+1)^2}{\Gamma(d+1)^2}\frac{\omega_{d-1}}{\omega_d}L^{d-1}\log L,
\]
as desired.

It remains to prove that $\{\varrho_L\}_{L\ge 0}$ is a sequence of radial mollifiers. Condition $(i)$ in \eqref{def_mollifier} is immediate. To prove $(ii)$ we let $\delta>0$ be fixed. Then,
\[
\int_{B(y,\delta)^c}\varrho_{L}(x,y)\, d\sigma(x)=
\frac{1}{\log L}\frac{\omega_{d-1}}{\omega_d}\int_{\delta}^{\pi}  \theta P_L^{(1+\lambda,\lambda)}(\cos \theta)^2 \sin^{d-1} \theta\, d\theta,
\]
which, using exactly the same estimates as above, we to be asymptotically of the order $L^{-1}$, and therefore tends to zero as $L\to\infty$. This proves (ii) and completes the proof.
\end{proof}

With a similar computation as above, one can see that it is not possible to apply Theorem~\ref{thm:davila sphere} to study the smooth statistics. Indeed, from Theorem~\ref{variance asymptotic} we know that the order of the smooth statistic is $L^{d+1}$, but the functions $\varrho_L(x,y):=L^{-(d-1)}K_L(x,y)^2d(x,y)^2$ are not mollifiers because 
\[
\lim_{L \to +\infty}\int_{B(x,\delta)^c}\varrho_L(x,y)\, d\sigma(y)\approx 1.
\]

\subsection{Spherical ensemble: smooth and rough statistics}

To determine the asymptotic behavior of the linear statistics for this point process is now simple. It is enough to prove the following result, which shows that we are in the position of applying Theorem~\ref{thm:davila sphere} both in the smooth and  in the rough case.

\begin{lem}\label{lem: mollifiers spherical}
Let $K_N(x,y)=K_N(d(x,y))$ be the kernel of the $N$ points spherical ensemble on $\mathbb S^2$. Let, for $p\in \{1,2\}$,
\[
C_{p,N}=\Big(\frac{1}{\sqrt{N}}\Big)^{2-p}\int_{\mathbb S^2}K_N(x,y)^2  d(x,y)^{p}\, d\sigma(x).
\]
Then,
\[
    \lim_{N\to\infty} C_{1,N}=\sqrt{\pi}, \quad \lim_{N\to\infty} C_{2,N}=4,
\]
and,
    \[
       \varrho_{p,N}(r)= \frac{1}{C_{p,N}}\Big(\frac{1}{\sqrt{N}}\Big)^{2-p}K_N(r)^2  r^p, \quad N>0,
    \]
is a sequence of radial mollifiers on $\mathbb S^2$.
\end{lem}

\begin{proof}
Clearly, for every $N>0$, the function $\varrho_N$ satisfies $(i)$ in the definition of radial mollifiers \eqref{def_mollifier}. Observe that for $g(x)=z,g(y)=w \in \mathbb C$, where $g$ stands for the stereographic projection,
\[
d(x,y)=2\arctan \left| \frac{z-w}{1+z\bar{w}}\right|.
\]
Now for any $y\in \mathbb S^2$ (without loss of generality we assume that $g(y)=0$),
\begin{equation}\label{global part spherical}
\begin{split}
    \int_{d(x,y)>\delta}K_N(x,y)^2  d(x,y)^{p} d\sigma(x)&=2^{p+1}N^2 \int_{|z|>\tan\frac{\delta}{2}} \frac{\arctan^p|z| }{(1+|z|^2)^{N-1}}\frac{dz}{\pi (1+|z|^2)^2}\\
    &=2^{p+1}N^2\int_{\tan\frac{\delta}{2}}^{+\infty} \frac{r \arctan^p{r}}{(1+r^2)^{N+1}}dr.
\end{split}  
\end{equation}
It follows that
\[
    \Big(\frac{1}{\sqrt{N}}\Big)^{2-p}\int_{d(x,y)>\delta}K_N(x,y)^2  d(x,y)^{p} d\sigma(x)\lesssim N^{1+\frac{p}{2}}\int_{\tan\frac{\delta}{2}}^{+\infty} \frac{r \ dr}{(1+r^2)^{N+1}}=\frac{N^{p/2}}{(\tan^2(\delta/2)+1)^N},
\]
which tends to $0$ as $N\to\infty$. Then, to prove $(ii)$, since
\[
    \int_{d(x,y)>\delta}\varrho_{p,N}(d(x,y)) d\sigma(x)=\frac{1}{C_{p,N}}\Big(\frac{1}{\sqrt{N}}\Big)^{2-p} \int_{d(x,y)>\delta}K_N(x,y)^2  d(x,y)^{p} d\sigma(x),
\]
it is enough to prove that $C_{p,N}$ converges to a constant as $N\to\infty$. By taking the limit as $\delta\to 0$ in \eqref{global part spherical}, we have
\[
C_{p,N}=2^{p+1} N^{1+p/2} \int_{0}^{+\infty} \frac{r \arctan^p{r} }{(1+r^2)^{N+1}}\,
dr.
\]
Integrating by parts,
\[
\int_{0}^{+\infty} \frac{r \arctan{r} }{(1+r^2)^{N+1}}\, 
dr=\frac{\sqrt{\pi} \Gamma(N+\frac{1}{2})}{4N\Gamma(N+1)}\sim \frac{\sqrt{\pi}}{4N^{3/2}}, \quad N\to\infty,
\]
and integrating by parts again,
\[
\int_{0}^{+\infty} \frac{r \arctan^2{r} }{(1+r^2)^{N+1}}
dr=\frac{1}{N}\int_{0}^{+\infty} \frac{ \arctan {r} }{(1+r^2)^{N+1}}\, 
dr\sim \frac{1}{2N^2}.
\]
To justify the last estimate, observe that the integral
$\int_{\varepsilon}^{+\infty} \frac{ \arctan {r}}{(1+r^2)^{N+1}}\, dr$ is exponentially small
for $\varepsilon>0$ and the integral 
$\int_{0}^{\varepsilon} \frac{ \arctan {r}}{(1+r^2)^{N+1}}\, dr$
can be estimated with $\arctan {r}\sim r$. Then, it is proved that
\[
    \lim_{N\to\infty} C_{1,N}=\sqrt{\pi}, \quad \lim_{N\to\infty} C_{2,N}=4,
\]
which also implies that $\varrho_{p,N}$ is a family of radial mollifiers and concludes the proof.
\end{proof}

\begin{proof}[Proof of Theorem~\ref{variance asymptotic spherical smooth}]
    Applying \eqref{equation_variance} to $f \in H^1(\mathbb S^2)$, we obtain
\[
\var\Big( \sum_{i=1}^N f(x_i)\Big)=\frac{C_{2,N}}{2}\iint_{\mathbb S^2\times \mathbb S^2}
\frac{|f(x)-f(y)|^2}{d(x,y)^2}\varrho_{2,N}(d(x,y))\, d\sigma(x)d\sigma(y),  
\]
where $C_{2,N}$ and $\varrho_{2,N}$ are defined as in Lemma~\ref{lem: mollifiers spherical}. Then, applying Lemma~\ref{lem: mollifiers spherical} for $p=2$ and Theorem~\ref{thm:davila sphere}, and recalling that $K_{2,2}=1/2$, we get the result.
\end{proof}

\begin{proof}[Proof of Theorem~\ref{variance asymptotic spherical rough}]
    Applying \eqref{equation_variance} to the characteristic function of $A\subset \mathbb S^2$ we get
\[
\frac{\var(n_A)}{\sqrt{N}}=\frac{C_{1,N}}{2}\iint_{\mathbb S^2\times \mathbb S^2}
\frac{|\chi_A(x)-\chi_A(y)|}{d(x,y)} \varrho_{1,N}(d(x,y))\, d\sigma(x)d\sigma(y).
\]
where $C_{1,N}$ and $\varrho_{1,N}$ are defined as in Lemma~\ref{lem: mollifiers spherical}. Then, applying Lemma~\ref{lem: mollifiers spherical} for $p=1$ and Theorem~\ref{thm:davila sphere}, and recalling that $K_{2,1}=2/\pi$, we get the result.
\end{proof}

\subsection{Norm representations in \texorpdfstring{$\mathbb R^d$}{Euclidean space}}\label{sec: norm representation euclidean space}
Given a domain $\Omega\subset \R^d$, $1\leq p<\infty$, and a family $\varrho_n: \R^d\times \R^d\to \R^+$, we say that $\varrho_n$ are asymptotically radial $p$-mollifiers in $\Omega$ if they satisfy
\begin{enumerate}
    \item[(a)] For any $\delta>0$ and any $x\in \Omega$, $\lim_{n \to\infty} \int_{B(x,\delta)^c} \frac{\varrho_n(x,y)}{|x-y|^p} dy = 0$, uniformly on compact sets $K\subset \Omega$.
    \item [(b)]For any $x\in \Omega$, $\lim_{n\to\infty}\int_{B(x,1)} \varrho_n(x,y)\, dy = 1$ uniformly on compact sets  $K\subset \Omega$.
    \item[(c)] For any $x\in \Omega$  there are functions $h_n:\R^+\to \R$, such that
     \[\lim_{n\to\infty}\int_{B(x,1)} \frac{|\varrho_n(x,y)-h_n(|x-y|)|}{|x-y|^p}\, dy = 0,\]
     uniformly on compact sets $K\subset \Omega$.
    \item[(d)] They are symmetric $\varrho_n(x,y) = \varrho_n(y,x)$.
\end{enumerate}
Condition (a) indicates that $\varrho_n(x,y)$ are concentrated in the diagonal, while condition (c) indicates that near the diagonal the mollifiers $\varrho_n$ are close to radial. In the most classical case, i.e., when $\varrho_n$ are functions of $|x-y|$, the hypothesis (c) and (d) are empty, and the above conditions reduce to (a) and (b). In this restricted setting, Theorem~\ref{thm:asymptotically radial mollifiers} is well known: see \cite{brezis} for the case in which the $\varrho_n$ are radial mollifiers, i.e., functions satisfying \eqref{def_mollifier}, and \cite{Gounoue} for the case in which, more in general, the $\varrho_n$ are radial functions satisfying (a) and (b).

\begin{proof}[Proof of Theorem~\ref{thm:asymptotically radial mollifiers}]
Let $f\in L^p(\R^d)$ be a function with compact support $K$, and let $\Omega$ be a $\delta$-neighborhood of $K$, for some small $\delta>0$. Clearly,
\[\int_{\Omega^c}\int_{B(x,\delta)}\frac{|f(x)-f(y)|^p}{|x-y|^p}\varrho_n(x,y) dydx=0.
\]
Moreover, by discrete Jensen's inequality and (d) it is easily seen that
\[\int_{\R^d}\int_{B(x,\delta)^c}\frac{|f(x)-f(y)|^p}{|x-y|^p}\varrho_n(x,y) dydx\le 2^p\int_{\R^d}|f(x)|^p\int_{B(x,\delta)^c} \frac{\varrho_n(x,y)}{|x-y|^p}\, dydx,
\]
which by property (a), the fact that $f\in L^p(\R^d)$ with compact support, and the dominated convergence theorem, tends to zero as $n\to\infty$. Then we have

\[
\iint_{\R^d\times \R^d}\frac{|f(x)-f(y)|^p}{|x-y|^p}\varrho_n(x,y)\, dxdy=\int_{\Omega}\int_{B(x,\delta)}\frac{|f(x)-f(y)|^p}{|x-y|^p}\varrho_n(x,y)\, dydx+o(1).
\]
Reasoning as above,
\[
\begin{split}
    &\int_{\Omega}\int_{B(x,\delta)^c}\frac{|f(x)-f(y)|^p}{|x-y|^p}|\varrho_n(x,y)-h_n(|x-y|)| dydx\\
    &\le 2^p\int_{K}|f(x)|^p\int_{B(x,\delta)^c} \frac{|\varrho_n(x,y)-h_n(|x-y|)|}{|x-y|^p}\, dydx,
\end{split}
\]
which by property (c), the fact that $f\in L^p(\R^d)$, and the dominated convergence theorem, tends to zero as $n\to\infty$, uniformly in $x\in K$. Then,
\begin{equation}\label{rho to h double integral}
\iint_{\R^d\times \R^d}\frac{|f(x)-f(y)|^p}{|x-y|^p}\varrho_n(x,y)\, dxdy=\int_{\Omega}\int_{B(x,1)}\frac{|f(x)-f(y)|^p}{|x-y|^p}h_n(|x-y|) dydx+o(1).
\end{equation}

Suppose now that $f\in \mathcal{C}^2_c(\R^d)$ with support in $K$. Reasoning as in  \cite[Theorem 2]{BBM}, we get that for any $x\in\Omega$,
\begin{equation}
\begin{split}\label{eq:bbm}
&\int_{B(x,1)} \frac{|f(x)-f(y)|^p}{|x-y|^p}h_n(|x-y|)\, dy\\
&=K_{d,p} |\nabla f(x)|^p\int_{B(x,1)} h_n(|x-y|)\, dy + \mathcal{O}\left(\int_{B(x,1)} |x-y|^p h_n(|x-y|)\, dy\right).
\end{split}
\end{equation}
Observe that for any $\varepsilon\in (0,1)$, by property (c)
\[
\begin{split}
    \int_{B(x,1)}& |x-y|^ph_n(|x-y|)\,  dy \\
    & =\int_{B(x,\varepsilon)} |x-y|^ph_n(|x-y|)\,  dy+ \int_{\varepsilon\le|x-y|<1} |x-y|^ph_n(|x-y|)\,  dy\\
    &\leq \varepsilon^p \int_{B(x,\varepsilon)} h_n(|x-y|) \, dy+\int_{\varepsilon\le|x-y|<1} \frac{h_n(|x-y|)}{|x-y|^p}\,  dx\\
    &= \varepsilon^p \int_{B(x,\varepsilon)} \varrho_n(x,y) \, dy+\int_{\varepsilon\le|x-y|<1} \frac{\varrho_n(x,y)}{|x-y|^p}\,  dx+o(1),
\end{split}
\]
which by properties (a), (b), and the arbitrariness of $\varepsilon$, tends to zero uniformly on compact sets. Thus, the second summand in \eqref{eq:bbm} tends to 0, while the first summand tends to $K_{d,p}|\nabla f(x)|^p$ thanks to property (b), again uniformly on compact sets. Therefore, we have seen that 
\begin{equation}
\begin{split}
    &\lim_{n\to\infty} \int_{B(x,1)} \frac{|f(x)-f(y)|^p}{|x-y|^p}\varrho_n(x,y) \, dy\\
    &=\lim_{n\to\infty}\int_{B(x,1)} \frac{|f(x)-f(y)|^p}{|x-y|^p}h_n(|x-y|)\, dy =K_{d,p}|\nabla f(x)|^p.
\end{split}   
\end{equation}
Moreover, the above integral is uniformly bounded on compact sets by (b), because $|f(x)-f(y)|\le L|x-y|$ as $f$ is Lipschitz. Then, by the dominated convergence theorem, we have
\begin{equation}\label{qmol}
\lim_{n\to\infty} \iint_{\R^d\times \R^d}\frac{|f(x)-f(y)|^p}{|x-y|^p}\varrho_n(x,y)\, dxdy =K_{d,p} \int_{\R^d} |\nabla f|^p, \quad f\in \mathcal C^2_c(\mathbb R^d),
\end{equation}
which proves the result for smooth functions.

Let us now discuss how to extend the result to non-smooth functions. Suppose that $f$ is a general compactly supported function in $L^\infty(\R^d)$, and define the regularized functions
\[
f_\varepsilon(x)=\int_{\R^d} \phi_\varepsilon(z)f(x-z) dz,
\]
where $\phi_\varepsilon$ are smooth, positive, and compactly supported approximation of the unity functions that depending only on the distance to the origin, and normalized to have $\int_{\R^d} \phi_\varepsilon(z)dz = 1$. Since $f$ has compact support, $f_\varepsilon$ is smooth and has compact support, and we can assume that $K$ contains the support of $f$ as well as that of all the regularized functions $f_\varepsilon$. Then, by \eqref{qmol}, \eqref{rho to h double integral} and Jensen's inequality we have

\[
        \begin{split}
    &K_{d,p}\int_{\R^d}|\nabla f_\varepsilon|^p=\lim_{n\to\infty}\int_{\Omega}\int_{B(x,1)}\frac{|f_\varepsilon(x)-f_\varepsilon(y)|^p}{|x-y|^p}h_n(|x-y|) dydx\\
    &\le\liminf_{n\to\infty}\int_{\Omega}\int_{\R^d} 
\phi_\varepsilon(z)\int_{B(x,1)}\frac{|f(x-z)-f(y-z)|^p}{|x-y|^p}h_n(x-y)\, dydzdx\\
&=\liminf_{n\to\infty}\int_{\Omega}\int_{B(x,1)}\frac{|f(x)-f(y)|^p}{|x-y|^p}h_n(x-y)\, dydx\\
&=\liminf_{n\to\infty}\iint_{\R^d\times \R^d}\frac{|f(x)-f(y)|^p}{|x-y|^p}\varrho_n(x,y)\,  dydx.
\end{split}
\]
Reasoning exactly as in the proof of Theorem~\ref{thm:davila sphere}, we have that the inferior limit in the last line being finite implies $f\in W^{1,p}(\R^d)$, if $1<p<\infty$, and $f\in BV(\R^d)$, if $p=1$. Moreover, if $f\in W^{1,p}(\R^d)$ for some $1\leq p<\infty$,
\[
    K_{d,p}\int_{\R^d}|\nabla f|^p\leq \liminf_{n\to\infty}\iint_{\R^d\times\R^d}\frac{|f(x)-f(y)|^p}{|x-y|^p}\varrho_n(x,y)\,  dydx,
\]
and if $f\in BV(\R^d)$
\[
    K_d[f]_{BV}\leq \liminf_{n\to\infty}\iint_{\R^d\times\R^d}\frac{|f(x)-f(y)|}{|x-y|}\varrho_n(x,y)\,  dydx.
\]
It remains to show the reverse inequality for the superior limit. Choose a sequence $\{f_j\}$ of smooth and compactly supported functions which converges to $f$ pointwise and such that
\[
\lim_j\int_{\R^d}|\nabla f_j|^p=
    \int_{\R^d}|\nabla f|^p, \quad \text{or} \quad\lim_j\int_{\R^d}|\nabla f_j| =[f]_{BV(\R^d)},
\]
if, respectively, $f\in W^{1,p}(\R^d)$ or $f\in BV(\R^d)$. Recall that such a sequence exists by the density of smooth compactly supported functions in $W^{1,p}(\R^d)$, or by \eqref{miranda approximation}, respectively. By \eqref{eq:bbm}  we have that
\begin{equation*}
\int_{B(x,1)} \frac{|f_j(x)-f_j(y)|^p}{|x-y|^p}h_n(|x-y|)\, dy = K_{d,p} |\nabla f_j(x)|^p\int_{B(x,1)} h_n(|x-y|)\, dy +o(1).
\end{equation*}
We denote 

\[
C_n=\int_{B(x,1)} h_n(|x-y|)\, dy.
\]
Then, by \eqref{rho to h double integral}, Fatou's lemma and \eqref{qmol} we get
\[
\begin{split}
       &\iint_{\R^d\times\R^d}\frac{|f(x)-f(y)|^p}{|x-y|^p}\varrho_n(x,y)\,  dydx= \int_{\Omega}\int_{B(x,1)} \frac{|f(x)-f(y)|^p}{|x-y|^p}\varrho_n(x,y)\, dydx +o(1)\\
       &\le\liminf_{j\to\infty}\int_{\Omega}\int_{B(x,1)} \frac{|f_j(x)-f_j(y)|^p}{|x-y|^p}\varrho_n(x,y)\, dy + o(1) \\
       &=\liminf_{j\to\infty}\int_{\Omega}\int_{B(x,1)} \frac{|f_j(x)-f_j(y)|^p}{|x-y|^p}h_n(|x-y|)\, dy + o(1) \\
       &=  \lim_{j\to\infty} C_n K_{d,p} \int_{\R^d}|\nabla f_j|^p +o(1).
\end{split}
\]
As $\lim_n C_n = 1$, it follows that
\[
\limsup_{n\to\infty}\iint_{\R^d\times\R^d}\frac{|f(x)-f(y)|^p}{|x-y|^p}\varrho_n(x,y)\,  dydx \leq K_{d,p} \int_{\R^d}|\nabla f|^p,
\]
when $f\in W^{1,p}(\R^d)$, $1\leq p<\infty$, and
\[
\limsup_{n\to\infty}\iint_{\R^d\times\R^d}\frac{|f(x)-f(y)|^p}{|x-y|^p}\varrho_n(x,y)\,  dydx \leq K_{d,p} [f]_{BV(\R^d)},
\]
when $f\in BV(\R^d)$. This completes the proof.
\end{proof}

\subsection{Ginibre process: smooth and rough statistics}
The infinite Ginibre process has a radial kernel. In this case, we can deduce the asymptotic behavior of its (smooth and rough) linear statistics by means of the norm representation results by Brezis \cite{brezis}. In order to apply these results, we need to show that the squared modulus of the kernel of the process, properly renormalized, defines a family of radial mollifiers on $\mathbb C$.

\begin{lem}\label{lem: mollifiers ginibre}
Let $\mathcal{K}_L(z,w)=\mathcal{K}_L(|z-w|)$ be the kernel of the infinite Ginibre point process on $\mathbb C$ given by \eqref{kernel_ginibre}. Then, for $p>-2$, 
\[
    h_{p, L}(t)=C_{p,L} \mathcal{K}_L(t)^2t^p, \quad L> 0,
\]
where
\[
C_{p,L}=\pi\Gamma\Big(\frac{p}{2}+1\Big)^{-1}\Big(\frac{1}{\sqrt L}\Big)^{2-p},
\]
is a sequence of radial mollifiers on $\mathbb C$.
\end{lem}
\begin{proof}
For any $\delta>0$, by  \eqref{kernel_ginibre} we have
\[
\begin{split}
\int_{|z|<\delta} h_{p, L}(|z|)\,  dz&=2\pi\int_0^\delta h_{p, L}(t)t\, dt=\frac{2}{\pi}L^2C_{p,L}\int_0^\delta e^{-Lt^2}t^{p+1} \, dt\\
&=2\Gamma\Big(\frac{p}{2}+1\Big)^{-1}\int_0^{\sqrt{L}\delta} e^{-s^2}s^{p+1} ds.
\end{split}
\]
Since $\int_0^\infty e^{-s^2}s^{p+1} ds=\Gamma(1+p/2)/2$, taking the limit as $\delta\to \infty$ of the above expression we get,
\[
    \int_\mathbb{C}h_{p, L}(|z|)dz=1, \quad L>0,
\]
which is $(i)$ in the definition of radial mollifiers \eqref{def_mollifier}, while taking the limit as $L\to \infty$ we get that
\[
    \lim_{L\to\infty}\int_{|z|<\delta} h_{p, L}(|z|)\, dz=1,
\]
which is $(ii)$ in \eqref{def_mollifier}.
\end{proof}

\begin{proof}[Proof of Theorem~\ref{Ginibre-smooth}]
Applying \eqref{equation_variance} to $f \in H^1(\mathbb S^d)$ we get
\[
\begin{split}
    \var\Bigl(\sum_{x\in \Xi_L} f(x)\Bigr) &=\iint_{\mathbb C\times \mathbb C}
\frac{|f(z)-f(w)|^2}{|z-w|^2} \frac{\mathcal{K}_L(|z-w|)^2 |z-w|^2}{2}\, dzdw\\
&=\frac{1}{2\pi}\iint_{\mathbb C\times \mathbb C}
\frac{|f(z)-f(w)|^2}{|z-w|^2}h_{2,L}(|z-w|) \,dzdw,
\end{split}
\]
where $h_{2,L}$ is defined as in Lemma~\ref{lem: mollifiers ginibre}. Then, applying Lemma~\ref{lem: mollifiers ginibre} for $p=2$ and \cite[Theorem 2]{brezis}, and recalling that $K_{2,2}=1/2$, we get the result.
\end{proof}

\begin{proof}[Proof of Theorem~\ref{Ginibre-rough}]
Applying \eqref{equation_variance} to the characteristic function of $A\subset \mathbb C$, we get
\[
\begin{split}
    \frac{1}{\sqrt{L}}\var(n_A)&=\iint_{\mathbb C\times \mathbb C}
\frac{|\chi_A(z)-\chi_A(w)|}{|z-w|} \frac{\mathcal{K}_L(|z-w|)^2 |z-w|}{2\sqrt{L}}\, dzdw\\
&=\frac{1}{4\sqrt{\pi}}\iint_{\mathbb C\times \mathbb C}
\frac{|\chi_A(z)-\chi_A(w)|}{|z-w|}h_{1,L}(|z-w|)\, dzdw,
\end{split}
\]
where $h_{1,L}$ is defined as in Lemma~\ref{lem: mollifiers ginibre}. Then, applying Lemma~\ref{lem: mollifiers ginibre} for $p=1$ and \cite[Theorem 3]{brezis}, and recalling that $K_{2,1}=2/\pi$, we get the result.
\end{proof}

We now come to the finite Ginibre ensemble. In this case, the kernel of the process, given by \eqref{kernel_ginibre_finite}, is not radial, and the results of \cite{brezis} do not apply. We will get the asymptotic behavior of both the smooth and rough linear statistics as a consequence of Theorem~\ref{thm:asymptotically radial mollifiers}. In order to apply the result, we need to show that the squared modulus of the kernel of the process, properly renormalized, defines a family of asymptotically radial $p$-mollifiers.

\begin{lem}\label{lem: mollifiers ginibre finite}
Let $K_n(z,w)$ be the kernel of the finite Ginibre point process on $\mathbb C$ given by \eqref{kernel_ginibre_finite}. Then, for $p>-2$, 
\[
    \varrho_{p, n}(z,w)=C_{p,n} |K_n(z,w)|^2|z-w|^p, \quad n> 0,
\]
where $C_{p,n}$ is defined as in Lemma~\ref{lem: mollifiers ginibre}, is a sequence of asymptotically radial $p$-mollifiers on $\Omega=\lambda\mathbb D$, $\lambda \in (0,1)$.   
\end{lem}

\begin{proof}
Let $h_{p,n}$ be defined as in Lemma~\ref{lem: mollifiers ginibre}. Clearly 
\[
|K_n(z,w)|\le\frac{n}{\pi}e^{-\frac{n}{2}(|z|-|w|)^2}, \quad \mathcal{K}_n(|z-w|)\le \frac{n}{\pi}e^{-\frac{n}{2}(|z|-|w|)^2},
\]
from which follows
\[
|\sqrt{h_n(|z-w|)}+\sqrt{\varrho_n(z,w)}|\le 2C_{p,n}^{1/2}\frac{n}{\pi}e^{-\frac{n}{2}(|z|-|w|)^2}|z-w|^{p/2}.
\]
Moreover,
\[
\begin{split}
    \Big|e^{nz\overline{w}}-\sum_{k=0}^{n-1}\frac{(n z\overline{w})^k}{k!}\Big| &= \Big|\sum_{k=n}^\infty\frac{(n z\overline{w})^k}{k!}\Big|\le \frac{(n |z\overline{w}|)^n}{n!}\sum_{k=n}^\infty\frac{n^{k-n} |z\overline{w}|^{k-n}}{k!}n!\\
    &=\frac{n^n}{n!}|z\overline{w}|^n\sum_{k=0}^\infty\frac{n^k |z\overline{w}|^k}{(n+k)!}n!\le \frac{1}{1-\lambda^2}\frac{n^n}{n!}|z\overline{w}|^n,
\end{split}
\]
where in the last inequality we used  that $(n+k)!\ge n!n^k$ and $|z\overline{w}|<\lambda^2$. By Stirling's estimate, $n^n/n!\leq e^n/\sqrt{n}$, so we can further bound the above term by
\[
\frac{1}{1-\lambda^2}
    \frac{e^n}{\sqrt{n}}|z\overline{w}|^n=\frac{1}{1-\lambda^2}\frac{(|z\overline{w}| e^{1-|z\overline{w}|})^n}{\sqrt{n}}e^{|z\overline{w}| n}\leq \frac{(\lambda^2 e^{1-\lambda^2})^n}{1-\lambda^2}\frac{e^{|z\overline{w}| n}}{\sqrt{n}}.
\]
This implies,
\[
\begin{split}
  \Big|\mathcal{K}_n(|z-w|)-|K_n(z,w)|\Big|&\le\Big|\mathcal{K}_n(|z-w|)-K_n(z,w)\Big|= \frac{n}{\pi}e^{-n\frac{|z|^2+|w|^2}{2}}\Big|e^{nz\overline{w}}-\sum_{k=0}^{n-1}\frac{(n z\overline{w})^k}{k!}\Big|\\
  &\le \frac{\sqrt{n}}{\pi(1-\lambda^2)}(\lambda^2 e^{1-\lambda^2})^ne^{-\frac{n}{2}(|z|-|w|)^2}.
\end{split}
\]
Then,
\[
|\sqrt{h_n(|z-w|)}-\sqrt{\varrho_n(z,w)}|\le \frac{C_{p,n}^{1/2}\sqrt{n}}{\pi(1-\lambda^2)}(\lambda^2 e^{1-\lambda^2})^ne^{-\frac{n}{2}(|z|-|w|)^2}|z-w|^{p/2}.
\]
It follows,
\[
|h_n(|z-w|)-\varrho_n(z,w)|\le 2\Gamma\Big(\frac{p}{2}+1\Big)^{-1}\frac{n^{(p+1)/2}}{\pi(1-\lambda^2)}(\lambda^2 e^{1-\lambda^2})^ne^{-n(|z|-|w|)^2}|z-w|^{p}.
\]
Since $\lambda^2 e^{1-\lambda^2}<1$, we have
\[\lim_{n\to\infty}\int_{B(z,1)} \frac{|\varrho_n(z,w)-h_n(|z-w|)|}{|z-w|^p}\, dw = 0,
\]
and the convergence holds true uniformly on compact sets, since $e^{-n(|z|-|w|)^2}\leq 1$. This shows that $\varrho_n$ fulfills condition (c). Then by (c) and Lemma~\ref{lem: mollifiers ginibre} we immediately have that
\[\lim_{n\to\infty}\int_{B(z,1)} \varrho_n(z,w) \, dw = \lim_{n\to\infty}\int_{B(z,1)}h_n(|z-w|)
\, dw=1,
\]
i.e., that $\varrho_n$ fulfills condition (b). To prove (a), write
\[
\begin{split}
\int_{B(z,\delta)^c} \frac{\varrho_n(z,w)}{|z-w|^p}dw&=\int_{\mathbb C} \frac{\varrho_n(z,w)}{|z-w|^p} dw-\int_{B(z,\delta)} \frac{\varrho_n(z,w)}{|z-w|^p} dw\\
&=C_{p,n}\Big(\int_{\mathbb C} |K_n(z,w)|^2 dw-\int_{B(z,\delta)} \mathcal{K}_n(|z-w|)^2 dw\Big)+o(1),
\end{split}
\]
where in the last equality we used property (c). Now, on one hand 
\[
\int_{B(z,\delta)} \mathcal{K}_n(|z-w|)^2 dw=\frac{2n^2}{\pi}\int_0^\delta e^{-nr^2}r dr=\frac{n}{\pi}(1-e^{-n\delta^2})\sim \frac{n}{\pi}, \quad n\to\infty,
\]
while on the other, by the properties of reproducing kernels,
\[
\int_{\mathbb C} |K_n(z,w)|^2 dw=K_n(z,z),
\]
which is asymptotic to $n/\pi$ as $n\to\infty$ since $|z|<1$ (see Section \ref{sec:ginibre}). This proves that
\[
\lim_{n\to\infty}\int_{B(z,\delta)^c} \frac{\varrho_n(z,w)}{|z-w|^p}dw=0,
\]
which is (a), and completes the proof.
\end{proof}
We are now in the position of applying Theorem~\ref{thm:asymptotically radial mollifiers} to establish the asymptotic behavior of smooth and rough linear statistics of the finite Ginibre ensemble.

\begin{proof}[Proof of Theorem~\ref{finite Ginibre-smooth}]

Let  $f \in L^2(\mathbb C)$ with compact support $K\subset\mathbb D$. By \eqref{equation_variance},
\[
\begin{split}
\var\Big( \sum_{i=1}^N f(x_i)\Big)
&=\iint_{\mathbb C\times \mathbb C}
\frac{|f(z)-f(w)|^2}{|z-w|^2} \frac{K_N(z,w)^2 |z-w|^2}{2}\, dzdw\\
&=\frac{1}{2\pi}\iint_{\mathbb C\times \mathbb C}
\frac{|f(z)-f(w)|^2}{|z-w|^2}\varrho_{2,N}(|z-w|) \,dzdw,
\end{split}
\]
where $\varrho_{2,N}$ is defined as in Lemma~\ref{lem: mollifiers ginibre finite}. Since by Lemma~\ref{lem: mollifiers ginibre finite} the functions $\varrho_{2,N}$ are asymptotically radial $2$-mollifiers on $\Omega=\mathbb \lambda D$, which is an open neighborhood of the support of $f$ for a certain $0<\lambda < 1$ suitably chosen, by Theorem~\ref{thm:asymptotically radial mollifiers}, and recalling that $K_{2,2}=1/2$, we get the result.
\end{proof}

\begin{proof}[Proof of Theorem~\ref{finite Ginibre-rough}]
Let  $A$ be a Borel set such that $\overline{A}\subset \mathbb D$. Applying \eqref{equation_variance} to the characteristic function of $A$, we get
\[
\begin{split}
    \frac{1}{\sqrt{N}}\var(n_A)&=\iint_{\mathbb C\times \mathbb C}
\frac{|\chi_A(z)-\chi_A(w)|}{|z-w|} \frac{K_N(z,w)^2 |z-w|}{2\sqrt{N}}\, dzdw\\
&=\frac{1}{4\sqrt{\pi}}\iint_{\mathbb C\times \mathbb C}
\frac{|\chi_A(z)-\chi_A(w)|}{|z-w|}\varrho_{1,N}(|z-w|)\, dzdw,
\end{split}
\]
where $\varrho_{1,N}$ is defined as in Lemma~\ref{lem: mollifiers ginibre finite}. Since by Lemma~\ref{lem: mollifiers ginibre finite} the functions $\varrho_{1,N}$ are asymptotically radial $1$-mollifiers on $\Omega=\lambda\mathbb D$, which is an open neighborhood of $\overline{A}$ for a suitable chosen $0<\lambda<1$, by Theorem~\ref{thm:asymptotically radial mollifiers}, and recalling that $K_{2,1}=2/\pi$, we get the result.
    
\end{proof}

\subsection{Bessel's process: smooth and rough statistics}

The smooth statistics of the Bessel point process follow directly from the Riemann-Lebesgue theorem and some classical estimates for the Bessel functions.

\begin{proof}[Proof of Proposition~\ref{rough_bessel}]
We know that by \eqref{equation_variance} we need to compute
\[
 \lim_{L\to\infty} \frac{ 2^{d-1}\Gamma\left( \frac{d}{2}+1\right)^2}{L^{d-1}} \int_{\R^d}\int_{\R^d}  \frac {L^{d} J_{d/2}^2(L\pi|x-y|)} {(\pi|x-y|)^d} |f(x)-f(y)|^2\, dxdy.
\]
We take $M$ very big and split the integral in the points near the diagonal $|x-y| < M/L$ and the points far from it. In the points near the diagonal we use that $|K_L(x,y)|\lesssim L^{d/2}$ and thus
\[
\begin{split}
    &\lim_{L\to \infty} \iint_{|x-y|< M/L} \frac {L J_{d/2}^2(L\pi|x-y|)} {(\pi|x-y|)^d} |x-y|^{d+1}\frac{|f(x)-f(y)|^2}{|x-y|^{d+1}}\, dxdy \\
    &\lesssim M^{d+1}\lim_{L\to \infty}  \iint_{|x-y| < M/L}\frac{|f(x)-f(y)|^2}{|x-y|^{d+1}}\, dxdy = 0.
\end{split}  
\]
As for the points far from the diagonal, we use the fact that
\[
 J_\alpha(z) = \sqrt{\frac{2}{\pi z}}\left(\cos \Bigl(z-\frac{\alpha\pi}2 -\frac \pi 4\Bigr)+ \bigO{z^{-1}}\right),
\]
and
\[
\begin{split}
 &\lim_{L\to\infty}  \frac{ 2^{d-1}\Gamma\left( \frac{d}{2}+1\right)^2}{L^{d-1}} \iint_{|x-y|L>M} \frac {L^{d} J_{d/2}^2(L\pi |x-y|)} {(\pi|x-y|)^d} |f(x)-f(y)|^2\, dxdy  \\
 &= \frac{ 2^{d}\Gamma\left( \frac{d}{2}+1\right)^2}{\pi^{d+2}} \Big(\lim_{L\to\infty}\iint_{|x-y|L>M}\!\! \cos^2\Big(L|x-y| -\frac{(d+1)\pi}{4}\Big)\frac{|f(x)-f(y)|^2}{|x-y|^{d+1}}\,dxdy \\
 &+\lim_{L\to\infty}\iint_{|x-y|L>M}\!\! \bigO{\frac 1{L|x-y|}} \frac{|f(x)-f(y)|^2}{|x-y|^{d+1}}\,dxdy\Big).
\end{split}
\]
The second summand is $\bigO{1/M}[f]_{1/2}^2$, so we can discard it taking $M$ very big and
the first summand
\begin{align*}
 \lim_{L\to\infty} & \iint_{|x-y|>M/L} \cos^2\Big(L|x-y| -\frac{(d+1)\pi}{4}\Big)\frac{|f(x)-f(y)|^2}{|x-y|^{d+1}}\,dxdy 
 \\
 &
 = \frac 12\iint_{\R^d\times\R^d}  \frac{|f(x)-f(y)|^2}{|x-y|^{d+1}}\,dxdy,
\end{align*}
by the cosine double angle formula and the Riemann-Lebesgue theorem.
\end{proof}

To establish the asymptotic behavior of the rough statistics of the Bessel process, instead, we can exploit Theorem~\ref{thm:asymptotically radial mollifiers}. Since the kernel of the process is radial in this case,  only conditions (a) and (b) need to be verified, and Theorem~\ref{thm:asymptotically radial mollifiers} reduces to the results proved in \cite{Gounoue}.

\begin{proof}[Proof of Theorem~\ref{Bessel-rough}]
Let $K_L(x,y)=K_L(|x-y|)$ be the Bessel kernel given in \eqref{kernel_bessel}, namely
\[
K_L(|x-y|) = B_{d,L} \frac {J_{d/2}(L\pi|x-y|)} {|x-y|^{d/2}}, \quad B_{d,L}:=\Big(\frac{2}{\pi}\Big)^{d/2} \Gamma\left(\frac{d}{2}+1\right)L^{d/2}.
\]
Applying \eqref{equation_variance} to the characteristic function of $A$ we get
\[
\begin{split}
        \var(n_A)&=\iint_{\R^d\times \R^d}
\frac{|\chi_A(x)-\chi_A(y)|}{|x-y|}\frac{K_L(x,y)^2|x-y|}{2}\, dxdy\\
&=C_{d,L}\iint_{\R^d\times \R^d}
\frac{|\chi_A(x)-\chi_A(y)|}{|x-y|}\varrho_L(|x-y|)\, dxdy,
\end{split}
\]
where
\[
\varrho_L(t):=\frac{1}{2C_{d,L}}K_L(t)^2t=\frac{B_{d,L}^2}{2C_{d,L}}\frac{J_{d/2}^2(L\pi t)}{t^{d-1}}, \quad C_{d,L}=K_d^{-1}C_dL^{d-1}\log L,
\]
and $C_d$ is as in the statement of the theorem.
Then, in view of Theorem~\ref{thm:asymptotically radial mollifiers}, to prove the result it is enough to show that the functions $\varrho_L$ satisfy (a) and (b) in the definition of asymptotically radial $1$-mollifiers.

To see this, begin by observing that, up to multiplication by a constant depending only on $d$, from the bound
$J_{d/2}^2(t)\lesssim 1/t$, we get
\[
\varrho_L(t)\approx \frac{L}{\log L}\frac{J_{d/2}^2(L\pi t)}{(\pi t)^d}t\lesssim \frac{1}{L\log L}\frac{1}{t^d},
\]
from which follows that for any $\delta>0$
\[
\int_{B(x,\delta)^c}\frac{\varrho_L(|x-y|)}{|x-y|}dy\lesssim \frac{1}{L\log L}\int_\delta^\infty\frac{1}{t^2}dt,
\]
which clearly tends to zero as $L\to\infty$. This proves (a), for $p=1$, in the definition of asymptotically radial mollifiers. It only remains to show that for any fixed $\delta>0$, the integral
\begin{equation}\label{split_integral}
       \int_{B(x,\delta)}\varrho_L(|x-y|)\, dy=
\omega_{d-1}\int_0^{1/L}\varrho_L(t)t^{d-1}\, dt+\omega_{d-1}\int_{1/L}^\delta \varrho_L(t)t^{d-1}\, dt, 
\end{equation}
tends to 1 as $L\to\infty$.

For the first integral to the right of \eqref{split_integral}, using that $J_{d/2}^2(L\pi t)\le 1$, we get
\[
\int_0^{1/L}\varrho_L(t)t^{d-1}\, dt\lesssim \frac{L}{\log L}\int_0^{1/L}\, dt
\]
which converges to zero when $L\to +\infty$.

For the second integral in the right hand-side of \eqref{split_integral}, we use the asymptotic estimate
\[
 J_{d/2}(z) = \sqrt{\frac{2}{\pi z}}\left(\cos \Big(z-\frac{(d+1)\pi}4\Big)+\bigO{z^{-1}} \right),
\]
to get
\[
\begin{split}
  \int_{1/L}^\delta \varrho_L(t)t^{d-1}\, dt&=\frac{B_{d,L}^2}{2C_{d,L}}\int_{1/L}^\delta J_{d/2}^2(L\pi t)\, dt\\
  &=\frac{1}{\pi^2L}\frac{B_{d,L}^2}{C_{d,L}}\int_{1/L}^\delta\frac{1}{t}\left(\cos \Big(L\pi t-\frac{(d+1)\pi}4\Big)+\frac{1}{L}\bigO{t^{-1}} \right)^2 dt.
\end{split}
\]
Since $B_{d,L}^2/C_{d,L}\approx L/\log L$, for $L\to\infty$ we get
\[
\begin{split}
  \int_{1/L}^\delta \varrho_L(t)t^{d-1}\, dt&= \frac{1}{\pi^2L}\frac{B_{d,L}^2}{C_{d,L}}\int_{1/L}^\delta\frac{1}{t} \cos^2 \Big(L\pi t-\frac{(d+1)\pi}4\Big)dt+ o(1)\\
  &=\frac{1}{2\pi^2L}\frac{B_{d,L}^2}{C_{d,L}}\int_{1/L}^\delta\frac{1}{t}dt+ o(1),
\end{split}
\]
where in the last line we used the trigonometric identity $2\cos^2(\theta)=1+\cos(2\theta)$ and the Riemann-Lebesgue lemma. Putting all together,
\[
\begin{split}
    \int_{B(x,\delta)}\varrho_L(|x-y|)\, dy&=
\frac{\omega_{d-1}}{2\pi^2L}\frac{B_{d,L}^2}{C_{d,L}}\int_{1/L}^\delta\frac{1}{t}dt+ o(1)=\frac{\omega_{d-1}}{2\pi^2}\frac{\log L}{L}\frac{B_{d,L}^2}{C_{d,L}}+o(1)\\
&=\frac{\omega_{d-1}}{2\pi^2}=1+o(1),
\end{split}
\]
which proves (b) in the definition of asymptotically radial $1$-mollifiers and completes the proof.
\end{proof}

\bibliographystyle{amsalpha}
\bibliography{references}

\end{document}